\def\frk{\mathfrak}               
\def\Phi{{\frk N}}
\def\opn#1#2{\def#1{\operatorname{#2}}} 
\opn\chara{char} 
\opn\length{\ell} 
\opn\pd{pd} 
\opn\rk{rk}
\opn\projdim{proj\,dim} 
\opn\injdim{inj\,dim} 
\opn\rank{rank}
\opn\depth{depth} 
\opn\grade{grade} 
\opn\height{height}
\opn\embdim{emb\,dim} 
\opn\codim{codim}
\opn\Tr{Tr} 
\opn\bigrank{big\,rank}
\opn\superheight{superheight}
\opn\lcm{lcm}
\opn\trdeg{tr\,deg}
\opn\reg{reg} 
\opn\lreg{lreg} 
\opn\ini{in} 
\opn\lpd{lpd}
\opn\size{size}
\opn\mult{mult}
\opn\dist{dist}
\opn\cone{cone}
\opn\lex{lex}
\opn\rev{rev}
\opn\im{im}
\opn\m{m}
\opn\div{div} \opn\Div{Div} \opn\cl{cl} \opn\Cl{Cl}
\opn\Spec{Spec} \opn\Supp{Supp} \opn\supp{supp} \opn\Sing{Sing}
\opn\Ass{Ass} \opn\Min{Min}
\opn\Ann{Ann} \opn\Rad{Rad} \opn\Soc{Soc}
\opn\Syz{Syz} \opn\Im{Im} \opn\Ker{Ker} \opn\Coker{Coker}
\opn\Am{Am} \opn\Hom{Hom} \opn\Tor{Tor} \opn\Ext{Ext}
\opn\End{End} \opn\Aut{Aut} \opn\id{id} \opn\ini{in}
\opn\nat{nat}
\opn\pff{pf}
\opn\Pf{Pf} \opn\GL{GL} \opn\SL{SL} \opn\mod{mod} \opn\ord{ord}
\opn\Gin{Gin}
\opn\Hilb{Hilb}\opn\adeg{adeg}\opn\std{std}\opn\ip{infpt}
\opn\Pol{Pol}
\opn\sat{sat}
\opn\Var{Var}
\opn\Gen{Gen}
\opn\aff{aff} \opn\con{conv} \opn\relint{relint} \opn\st{st}
\opn\lk{lk} \opn\cn{cn} \opn\core{core} \opn\vol{vol}
\opn\link{link} \opn\star{star}
\opn\gr{gr}
\def\pot#1#2{#1[\kern-0.28ex[#2]\kern-0.28ex]}
\opn\dirlim{\underrightarrow{\lim}}
\opn\inivlim{\underleftarrow{\lim}}
\def\Implies{\ifmmode\Longrightarrow \else
        \unskip${}\Longrightarrow{}$\ignorespaces\fi}
\def\implies{\ifmmode\Rightarrow \else
        \unskip${}\Rightarrow{}$\ignorespaces\fi}
\def\iff{\ifmmode\Longleftrightarrow \else
        \unskip${}\Longleftrightarrow{}$\ignorespaces\fi}
\newtheorem{Theorem}{Theorem}[section]
\newtheorem{Lemma}[Theorem]{Lemma}
\newtheorem{Corollary}[Theorem]{Corollary}
\newtheorem{Proposition}[Theorem]{Proposition}
\newtheorem{Question}[Theorem]{Question}
\let\epsilon\varepsilon
\let\phi=\varphi
\let\kappa=\varkappa
\def\qed{\ifhmode\textqed\fi
      \ifmmode\ifinner\quad\qedsymbol\else\dispqed\fi\fi}
\def\textqed{\unskip\nobreak\penalty50
       \hskip2em\hbox{}\nobreak\hfil\qedsymbol
       \parfillskip=0pt \finalhyphendemerits=0}
\def\dispqed{\rlap{\qquad\qedsymbol}}
\opn\dis{dis}
\opn\height{height}
\opn\dist{dist}
\def\pnt{{\raise0.5mm\hbox{\large\bf.}}}
\opn\Lex{Lex}
\begin{document}

\title{On the three graph invariants related to matching of finite simple graphs}
\author{Kazunori Matsuda and Yuichi Yoshida}

\address{Kazunori Matsuda,
Kitami Institute of Technology, 
Kitami, Hokkaido 090-8507, Japan}
\email{kaz-matsuda@mail.kitami-it.ac.jp}

\address{Yuichi Yoshida, 
Kitami Institute of Technology, 
Kitami, Hokkaido 090-8507, Japan}
\email{f1812101792@std.kitami-it.ac.jp}

\subjclass[2010]{05C69, 05C70, 05E40, 13C15}
\keywords{induced matching number, minimum matching number, matching number, edge ideal, Castelnuovo--Mumford regularity}
\begin{abstract}
Let $G$ be a finite simple graph on the vertex set $V(G)$ and let $\text{ind-match}(G)$, $\text{min-match}(G)$ and $\text{match}(G)$ denote the induced matching number, the minimum matching number and the matching number of $G$, respectively. 
It is known that the inequalities $\text{ind-match}(G) \leq \text{min-match}(G) \leq \text{match}(G) \leq 2\text{min-match}(G)$ and $\text{match}(G) \leq \left\lfloor |V(G)|/2  \right\rfloor$ hold in general. 

In the present paper, we determine the possible tuples 
$(p, q, r, n)$ with $\text{ind-match}(G) = p$, $\text{min-match}(G) = q$, $\text{match}(G) = r$ and $|V(G)| = n$ arising from connected simple graphs.  
As an application of this result, we also determine the possible tuples 
$(p', q, r, n)$ with $\reg(G) = p'$, $\text{min-match}(G) = q$, $\text{match}(G) = r$ and $|V(G)| = n$ arising from connected simple graphs, where $I(G)$ is the edge ideal of $G$ and $\reg(G) = \reg(K[V(G)]/I(G))$ is the Castelnuovo--Mumford regularity of the quotient ring $K[V(G)]/I(G)$. 
\end{abstract}

\maketitle

\section*{Introduction}
Let $G = (V(G), E(G))$ be a finite simple graph on the vertex set $V(G)$ with the edge set $E(G)$. 
The main topic of this paper is graph-theoretical invariants related to {\em matching}. 
\begin{itemize} 
	\item A subset $M = \{e_{1}, \ldots, e_{s}\} \subset E(G)$ is said to be a {\em matching} of $G$ if $e_{i} \cap e_{j} = \emptyset$ for all $1 \leq i < j \leq s$. For a matching $M$, we write $V(M) = \{v : v \in e \ \text{for some} \ e \in M\}$. A {\em perfect matching} $M$ is a matching of $G$ with $V(M) = V(G)$. 
	\item A matching of $M$ of $G$ is {\em maximal} if $M \cup \{e\}$ cannot be a matching of $G$ for all $e \in E(G) \setminus M$. 
	Note that a matching $M$ is maximal if and only if $V(G) \setminus V(M)$ is an independent set of $G$. 
	\item A matching $M = \{e_{1}, \ldots, e_{s}\} \subset E(G)$ of $G$ is said to be an {\em induced matching} if, for all $1 \leq i < j \leq s$, there is no edge $e \in E(G)$ with $e \cap e_{i} \neq \emptyset$ and $e \cap e_{j} \neq \emptyset$. 
	\item The {\em matching number} \text{match}(G), the {\em minimum matching number} \text{min-match}(G) and the {\em induced matching number} \text{ind-match}(G) of $G$ are defined as follows respectively:
	\begin{eqnarray*}
	\text{match}(G)&=&\max\{|M| : M \text{\ is\ a\ matching\ of\ } G \}; \\
	\text{min-match}(G)&=&\min\{|M| : M \text{\ is\ a\ maximal\ matching\ of\ } G \}; \\
	\text{ind-match}(G)&=&\max\{|M| : M \text{\ is\ an\ induced\ matching\ of\ } G \}.  
	\end{eqnarray*}
\end{itemize}

Hall's marriage theorem \cite{Hall} says that, for any bipartite graph $G$ on the bipartition 
$V(G) = X \cup Y$ with $|X| \leq |Y|$, $\text{match}(G) = |X|$ holds if and only if 
$|N_{G}(S)| \geq |S|$ for all $S \subset X$, where $N_{G}(S) = \bigcup_{v \in S} N_{G}(v)$ 
and $N_{G}(v) = \{ w \in V(G) : \{v, w\} \in E(G) \}$.  
Besides this, there are many previous studies for $\text{match}(G)$, $\text{min-match}(G)$ 
and $\text{ind-match}(G)$, see \cite{AA, AV, BCL, DK, FR, Romeo, SMWZ}. 
To explain out motivation, we focus on the two known results as below.  

First, in \cite{HHKT}, it is proven that the inequalities 
\begin{equation}\label{3match}
\text{ind-match}(G) \leq \text{min-match}(G) \leq \text{match}(G) \leq 2\text{min-match}(G)
\end{equation}
hold for all simple graph $G$ and a classification of connected simple graphs $G$ satisfying $\text{ind-match}(G) =  \text{min-match}(G) = \text{match}(G)$ is given 
(\cite[Theorem 1]{CW}, \cite[Remark 0.1]{HHKO}) and such graphs are studied in \cite{HHKO, HKKMVT, HKMT, HKMVT, SF, T}. 
A classification of connected simple graphs $G$ with $\text{ind-match}(G) =  \text{min-match}(G)$ is also given in \cite{HHKT}. 

Second, by definition of matching, the equality 
\begin{equation}\label{UB_match}
\text{match}(G) \leq \left\lfloor |V(G)|/2 \right\rfloor
\end{equation}
holds and some classes of graphs $G$ with $\text{match}(G) = \left\lfloor |V(G)|/2 \right\rfloor$ are given (see \cite{CGH, GR, LV, Sumner}).

From (\ref{3match}) and (\ref{UB_match}), it is natural to arise the following question:  

\begin{Question}\label{Question}
Is there any connected simple graph $G = G(p, q, r, n)$ such that
\[ 
{\rm{ind}}\mathchar`-{\rm{match}}(G) = p, {\rm{min}}\mathchar`-{\rm{match}} = q, {\rm{match}}(G) = r \ and \ |V(G)| = n
\]
for all integers $p, q, r, n$ with $1 \leq p \leq q \leq r \leq 2q$ and $ r \leq \left\lfloor n/2 \right\rfloor$ ? 
\end{Question}

As a previous study related to Question \ref{Question}, a connected simple graph $G = G(p, q, r)$ such that $\text{ind-match}(G) = p, \text{min-match}(G) = q$ and  $\text{match}(G) = r$ was constructed for all integers $p, q, r$ with $1 \leq p \leq q \leq r \leq 2q$ (see \cite[Theorem 2.3]{HHKT}). 
Since the graph $G$ constructed in the proof of \cite[Theorem 2.3]{HHKT} satisfies $|V(G)| = 2\text{match}(G) + \text{ind-match}(G) - 1$, for example, we can see that there exists a connected simple graph $G = G(2, 3, 4, 9)$ such that $\text{ind-match}(G) = 2, \text{min-match}(G) = 3, \text{match}(G) = 4$ and $|V(G)| = 9$ by virtue of \cite[Theorem 2.3]{HHKT}. 
However, there is a connected simple graph $G = G(2, 3, 4, 8)$ such that $\text{ind-match}(G) = 2, \text{min-match}(G) = 3, \text{match}(G) = 4$ and $|V(G)| = 8$; see Figure \ref{fig:2348}. 
This graph says that it is necessary to construct new family of connected simple graphs  
in order to solve Question \ref{Question}.  

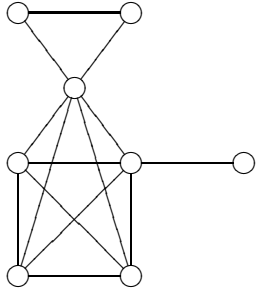
\begin{figure}[htbp]
\centering
\bigskip

	\begin{xy}
		\ar@{} (0,0);(70, -5) *\cir<4pt>{} = "A"
		\ar@{-} "A";(70, -20) *\cir<4pt>{} = "E";
		\ar@{-} "A";(85, -5) *\cir<4pt>{} = "B";
		\ar@{-} "A";(85, -20) *\cir<4pt>{} = "D";
		\ar@{-} "B";"D";
		\ar@{-} "B";"E";
		\ar@{-} "D";"E";
		\ar@{} (0,0);(70, 15) *\cir<4pt>{} = "C";
		\ar@{-} "C";(85, 15) *\cir<4pt>{} = "F";
		\ar@{} (0,0);(77.5, 5) *\cir<4pt>{} = "G";
		\ar@{-} "A";"G";
		\ar@{-} "B";"G";
		\ar@{-} "C";"G";
		\ar@{-} "D";"G";
		\ar@{-} "E";"G";
		\ar@{-} "F";"G";
		\ar@{-} "B";(100, -5) *\cir<4pt>{} = "H";
	\end{xy}

  \caption{A connected graph $G = G(2, 3, 4, 8)$ with $\text{ind-match}(G) = 2$, $\text{min-match}(G) = 3$, $\text{match}(G) = 4$ and $|V(G)| = 8$. }
  \label{fig:2348}
\end{figure}

Based on the above, we state main results of the present paper. 
As the first result is, we determine the possible tuples
\[
(\text{ind-match}(G), \text{min-match}(G), \text{match}(G), |V(G)|)
\] 
arising from connected simple graphs. 

\begin{Theorem}\label{Main1}(see Theorem \ref{first-main})\ 
Let $n \geq 2$ be an integer and set 
\begin{eqnarray*}
& & {\rm{\bf Graph}_{ind\text{-}match, min\text{-}match, match}}(n) \\
&=& \left\{(p, q, r) \in \mathbb{N}^{3} ~\left|~
\begin{array}{c}
  \mbox{\rm{There\ exists\ a\ connected\ simple\ graph}\ $G$ \ \rm{with}\ $|V(G)| = n$} \\
  \mbox{{\rm{and}} \ $\text{\rm ind-match}(G) = p, \ \text{\rm min-match}(G) = q, \ \text{\rm match}(G) = r$} \\ 
\end{array}
\right \}\right. .  
\end{eqnarray*}
Then we have the following: 
\begin{enumerate}
	\item[$(1)$] If $n$ is odd, then 
	\begin{eqnarray*}
	& & {\rm{\bf Graph}_{ind\text{-}match, min\text{-}match, match}}(n) \\
	&=& \left\{ (p, q, r) \in \mathbb{N}^{3} \ \middle| \ 1 \leq p \leq q \leq r \leq 2q \ \ {\rm{and}} \ \ r \leq \frac{n-1}{2} \right\} . 
	\end{eqnarray*}
	\item[$(2)$] If $n$ is even, then 
	\begin{eqnarray*}
	& & {\rm{\bf Graph}_{ind\text{-}match, min\text{-}match, match}}(n) \\ 
	&=& \left\{ (1, q, r) \in \mathbb{N}^{3} \ \middle| \ 1 \leq q \leq r \leq 2q \ \ {\rm{and}} \ \ r \leq \frac{n}{2} \right\} \\
	&\cup& \left\{ (p, q, r) \in \mathbb{N}^{3} \ \middle| \ 2 \leq p \leq q \leq r \leq 2q, \ r \leq \frac{n}{2}\ \ {\rm{and}}\ \ (q, r) \neq \left( \frac{n}{2}, \frac{n}{2} \right) \right\} . 
	\end{eqnarray*}
\end{enumerate}
\end{Theorem}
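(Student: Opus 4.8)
The plan is to prove the asserted equality by establishing both inclusions; the necessity direction is short, and essentially all the work is in the sufficiency direction for $p\ge 2$. For necessity, suppose $G$ is connected on $n$ vertices with $\text{ind-match}(G)=p$, $\text{min-match}(G)=q$, $\text{match}(G)=r$. The chain $1\le p\le q\le r\le 2q$ is exactly \eqref{3match}, and $r\le\lfloor n/2\rfloor$ is \eqref{UB_match}; this already gives the description for odd $n$. For even $n$ it remains to exclude the tuples with $p\ge 2$ and $(q,r)=(n/2,n/2)$. If $q=r=n/2$ then $G$ has a perfect matching and every maximal matching of $G$ has size $n/2$, i.e.\ is perfect; by Sumner's classification of the connected ``randomly matchable'' graphs \cite{Sumner}, $G$ is $K_n$ or $K_{n/2,n/2}$, and both satisfy $\text{ind-match}(G)=1$, so $p\ge 2$ forces $(q,r)\ne(n/2,n/2)$.

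\textbf{Sufficiency when $p=1$.} Given $(1,q,r)$ with $1\le q\le r\le 2q$ and $n\ge 2r$, let $G$ be the complete multipartite graph with parts of sizes $r-q$, $q$ and $n-r$ (the first part omitted when $r=q$, so that then $G=K_{q,\,n-q}$). From $r-q\le q\le n-r$ one gets that $n-r$ is the largest part, $G$ is connected, $\text{ind-match}(G)=1$, $\text{match}(G)=\min(\lfloor n/2\rfloor,\, n-(n-r))=r$, and $\text{min-match}(G)=q$: a maximal matching is one whose uncovered set lies in a single part, and a minimum such one must cover the two smaller parts, which needs exactly $q$ edges because the $q$-element part is independent. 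This realises every admissible $p=1$ tuple, for both parities of $n$ and including $(q,r)=(n/2,n/2)$ when $n$ is even.

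\textbf{Sufficiency when $p\ge 2$.} I would proceed via a ``seed'' plus a ``padding'' operation. For each $(p,q,r)$ with $2\le p\le q\le r\le 2q$ I build a connected seed graph realising $(p,q,r)$ on $n_0$ vertices, taking $n_0=2r$ if $q<r$ and $n_0=2r+1$ if $q=r$ (the value $2r$ being forbidden in the latter case). The seeds are put together from small gadgets with easily computed invariants that add under disjoint union --- $P_3$ gives $(1,1,1)$, $P_4$ gives $(1,1,2)$, the spider with $p$ legs of length $2$ realises $(p,p,p)$ on $2p+1$ vertices, and so on --- glued along carefully chosen vertices so as to restore connectivity without changing the three invariants (for example $P_6$ realises $(2,2,3)$ on $6$ vertices and $C_7$ realises $(2,3,3)$ on $7$ vertices). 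To reach an arbitrary $n\ge n_0$ I iterate the following padding step: attach a new pendant vertex at a vertex $v_0$ that lies on every maximum matching, lies on some minimum maximal matching, and satisfies $\text{ind-match}(G-N[v_0])<\text{ind-match}(G)$; a direct check shows the step preserves connectivity and all three invariants, that the seeds contain such a $v_0$, and that the enlarged graph again contains one, so the step can be repeated. The few seeds with no vertex on every maximum matching (these are factor-critical, and only occur when $q=r$) must instead be handled by a family indexed directly by $n$.

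\textbf{Expected main obstacle.} The hard part is entirely in the case $p\ge 2$: keeping control of all three invariants simultaneously, and especially of $\text{min-match}$, which is neither monotone under edge additions nor stable under identifying vertices, so every gluing and every padding step must be checked not to enlarge the matching or induced-matching number and not to kill a small maximal matching. Making the seed constructions and the padding together cover exactly the admissible tuples $(p,q,r,n)$ --- in particular treating correctly the even boundary $(q,r)=(n/2,n/2)$, where the Sumner obstruction of the necessity direction is sharp --- is where the bulk of the casework lies.
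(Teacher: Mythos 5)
Your necessity argument is sound and coincides with the paper's: the chain of inequalities is Proposition \ref{important}, and the exclusion of $(p,n/2,n/2)$ for $p\ge 2$ when $n$ is even follows from the classification of connected graphs with $\text{min-match}(G)=n/2$ as $K_n$ or $K_{n/2,n/2}$ (the paper cites Arumugam--Velammal, Proposition \ref{min = n/2}; your appeal to Sumner's randomly matchable graphs yields the same dichotomy). Your $p=1$ construction, the complete multipartite graph with parts of sizes $r-q$, $q$ and $n-r$, is correct and is a genuinely different, rather clean alternative to the paper's $G^{(1)}_{q,\,r-q,\,n-2r}$ (a clique $K_{2q}$ with $2(r-q)$ pendant vertices and $n-2r$ extra leaves at one clique vertex); your computation of $\text{min-match}$ there checks out.

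The gap is the entire sufficiency direction for $p\ge 2$, which is where the theorem lives. What you offer there is a strategy, not a proof: the ``seeds'' are exhibited only for a handful of small tuples ($P_4$, $P_6$, $C_7$, spiders), no general construction realising an arbitrary admissible $(p,q,r)$ on $n_0$ vertices is written down, and the padding step is asserted rather than verified. The padding step is genuinely delicate: your hypotheses on $v_0$ control $\text{match}$ (via ``$v_0$ lies on every maximum matching'') and $\text{ind-match}$ (via the condition on $G-N[v_0]$), but nothing in them prevents $\text{min-match}$ from \emph{decreasing} --- a maximal matching of the enlarged graph may use the new pendant edge $\{u,v_0\}$, and removing that edge need not leave a maximal matching of $G$, so the minimum over maximal matchings of the new graph is not obviously still $q$. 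You also explicitly defer the factor-critical $q=r$ seeds to ``a family indexed directly by $n$'' without producing one. The paper closes all of this at once with two further explicit parametric families: $G^{(2)}_{a,b,c,d,e}$ (a whiskered clique together with $d$ pendant copies of $P_4$ and $e$ pendant edges, all joined through one dominating vertex $w$), used in the two subcases $p+q-r\le 0$ and $p+q-r>0$ with $q<r$, and $G^{(3)}_{a,b,c}$, used when $q=r$; their invariants are computed once via the $S$-suspension, disjoint-union and leaf-edge lemmas, and the free parameter $c$ (the number of extra leaves) absorbs $n$ directly, so no induction on $n$ is needed. Until you supply general seeds covering every admissible $(p,q,r)$ and a verified padding lemma including the lower bound on $\text{min-match}$, the reverse inclusion is not established.
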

Note that this theorem gives a complete answer for Question \ref{Question}. 


The second main result is related to the invariants of edge ideals. 
Let $G$ be a finite simple graph on the vertex set $V(G) = \left\{ x_{1}, \ldots, x_{|V(G)|} \right\}$ and $E(G)$ the set of edges of $G$. 
Let $K[V(G)] = K\left[ x_{1}, \ldots, x_{|V(G)|} \right]$ be the polynomial ring in $|V(G)|$ variables over a field $K$. 
The {\em edge ideal} of $G$, denoted by $I(G)$, is the ideal of $K[V(G)]$ generated by quadratic monomials $x_{i}x_{j}$ associated with $\{x_{i}, x_{j}\} \in E(G)$.   
Among the current trends in combinatorial commutative algebra, the edge ideal is one of the main topic and has been studied multilaterally by many researchers, see \cite{CRT, CRTY, DHS, HaVanTuyl, KM, MV, NeP, SVV, Vi2001, W}. 

Let $\reg(G) = \reg(K[V(G)]/I(G))$ denote the {\em Castelnuovo--Mumford regularity} (regularity for short, see \cite[Section 18]{P}) of the quotient ring $K[V(G)]/I(G)$. 
It is known that 
\[
\text{ind-match}(G) \leq \reg(G) \leq \text{min-match}(G)
\]
holds for all simple graph $G$ (the lower bound was given by Katzman \cite{K} and the upper bound was given by Woodroofe \cite{W}).  
Moreover, H\`{a}--Van Tuyl proved that $\text{ind-match}(G) = \reg(G)$ holds if $G$ is a chordal graph (\cite[Corollary 6.9]{HaVanTuyl}). 
By virtue of this result together with Theorem \ref{Main1}, we also determine the possible tuples 
\[
(\reg(G), \text{min-match}(G), \text{match}(G), |V(G)|)
\]
arising from connected simple graphs. The second main result is as follows. 

\begin{Theorem}\label{Main2} (see Theorem \ref{second-main})\ 
Let $n \geq 2$ be an integer and set 
\begin{eqnarray*}
& & {\rm{\bf Graph}_{reg, min\text{-}match, match}}(n) \\
&=& \left\{(p', q, r) \in \mathbb{N}^{3} ~\left|~
\begin{array}{c}
  \mbox{\rm{There\ exists\ a\ connected\ simple\ graph}\ $G$\ {\rm{with}}\ $|V(G)| = n$ } \\
  \mbox{{\rm{and}} \ $\text{\rm reg}(G) = p', \ \text{\rm min-match}(G) = q, \ \text{\rm match}(G) = r$} \\ 
\end{array}
\right \}\right. .  
\end{eqnarray*}
Then one has 
\[
{\rm{\bf Graph}_{reg, min\text{-}match, match}}(n) = {\rm{\bf Graph}_{ind\text{-}match, min\text{-}match, match}}(n). 
\]
\end{Theorem}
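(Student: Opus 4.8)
The plan is to prove the two inclusions between ${\rm{\bf Graph}_{reg, min\text{-}match, match}}(n)$ and ${\rm{\bf Graph}_{ind\text{-}match, min\text{-}match, match}}(n)$ separately. The ingredients I would use are the sandwich $\text{ind-match}(G) \le \reg(G) \le \text{min-match}(G)$ of Katzman \cite{K} and Woodroofe \cite{W}, the equality $\reg(G) = \text{ind-match}(G)$ for chordal graphs \cite[Corollary 6.9]{HaVanTuyl}, and the explicit description of ${\rm{\bf Graph}_{ind\text{-}match, min\text{-}match, match}}(n)$ provided by Theorem \ref{first-main} $(=$ Theorem \ref{Main1}$)$.

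I would first treat ``$\subseteq$''. Let $(p', q, r) \in {\rm{\bf Graph}_{reg, min\text{-}match, match}}(n)$, witnessed by a connected graph $G$ on $n$ vertices with $\reg(G) = p'$, $\text{min-match}(G) = q$, $\text{match}(G) = r$. Since $G$ has an edge, $p' \ge 1$, and the sandwich inequality together with (\ref{3match}) and (\ref{UB_match}) gives $1 \le p' \le q \le r \le 2q$ and $r \le \lfloor n/2 \rfloor$. If $n$ is odd these are precisely the defining conditions of Theorem \ref{Main1}(1), so $(p',q,r) \in {\rm{\bf Graph}_{ind\text{-}match, min\text{-}match, match}}(n)$. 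If $n$ is even and $p' = 1$, the tuple already lies in the first set of Theorem \ref{Main1}(2). The only delicate case is $n$ even with $p' \ge 2$, where I must rule out $(q,r) = (n/2, n/2)$.

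To do this, suppose $n$ is even and $\text{min-match}(G) = \text{match}(G) = n/2$. Then every maximal matching of $G$ has cardinality between these two values, hence equals $n/2$, i.e.\ is perfect; so $G$ is randomly matchable, and being connected it is isomorphic to $K_n$ or to $K_{n/2,n/2}$ by Sumner's theorem \cite{Sumner}. Now $\reg(K_n) = \text{ind-match}(K_n) = 1$ because $K_n$ is chordal, and $\reg(K_{n/2,n/2}) = 1$ as well, since $I(K_{n/2,n/2}) = (x_1,\dots,x_{n/2})(y_1,\dots,y_{n/2})$ has a linear resolution (equivalently, the complementary graph is a disjoint union of two cliques, hence chordal). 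Either way $p' = 1$, contradicting $p' \ge 2$; so $(q,r) \ne (n/2,n/2)$ and $(p',q,r)$ lies in the second set of Theorem \ref{Main1}(2). This finishes ``$\subseteq$''.

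For ``$\supseteq$'', the point is that every tuple of ${\rm{\bf Graph}_{ind\text{-}match, min\text{-}match, match}}(n)$ is realized by a connected \emph{chordal} graph on $n$ vertices, namely by the graphs produced in the proof of Theorem \ref{first-main} — the graph of Figure \ref{fig:2348}, a $K_5$ with an extra triangle and a pendant vertex attached, is a representative instance and has no induced cycle of length $\ge 4$. Granting this, if $(p,q,r) \in {\rm{\bf Graph}_{ind\text{-}match, min\text{-}match, match}}(n)$ is realized by such a chordal $G$, then \cite[Corollary 6.9]{HaVanTuyl} gives $\reg(G) = \text{ind-match}(G) = p$ while $\text{min-match}(G) = q$ and $\text{match}(G) = r$, so $G$ itself witnesses $(p,q,r) \in {\rm{\bf Graph}_{reg, min\text{-}match, match}}(n)$; combining the two inclusions proves the theorem. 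I expect the main obstacle to be exactly this chordality claim: one must revisit each family of graphs constructed for Theorem \ref{first-main} and check the absence of long induced cycles. Everything else is bookkeeping with the inequalities above, save the structural input for the even case, which relies on Sumner's classification of randomly matchable graphs and the $\reg = 1$ computation for $K_n$ and $K_{m,m}$.
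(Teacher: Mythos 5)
Your proposal is correct and follows essentially the same route as the paper: the Katzman--Woodroofe sandwich for ``$\subseteq$'', chordality of the graphs from Theorem \ref{first-main} together with H\`a--Van Tuyl for ``$\supseteq$'', and the classification of connected graphs with ${\rm min}$-${\rm match}(G)={\rm match}(G)=n/2$ as $K_n$ or $K_{n/2,n/2}$ with regularity $1$ to exclude the tuple $(p',n/2,n/2)$ for $p'\geq 2$. The only cosmetic differences are that the paper invokes Arumugam--Velammal (Proposition \ref{min = n/2}) rather than Sumner's randomly matchable graphs for that classification, and uses Fr\"oberg's theorem for $\reg(K_{n/2,n/2})=1$; the chordality of the families $G^{(1)}$, $G^{(2)}$, $G^{(3)}$, which you rightly flag as the point to verify, is asserted without detailed proof in the paper as well.
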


Our paper is organized as follows. 
In Section 1, we prepare some lemmas and propositions in order to prove main results. 
In Section 2, we give a proof of Theorem \ref{Main1}. 
In Section 3, we introduce previous studies related to Theorem \ref{Main2} and give a proof.  


\section{Preparation}

In this section, we prepare some lemmas and propositions in order to prove our main results. 

\subsection{Known results}
In this subsection, we present known results related to our study.  
First, we recall two important inequalities. 

\begin{Proposition}\label{important}
Let $G$ be a finite simple graph on the vertex set $V(G)$. Then 
\begin{enumerate}
	\item[$(1)$] ${\rm{ind}}\mathchar`-{\rm{match}}(G) \leq {\rm{min}}\mathchar`-{\rm{match}}(G) \leq {\rm{match}}(G) \leq 2{\rm{min}}\mathchar`-{\rm{match}}(G)$. 
	\item[$(2)$] ${\rm{match}}(G) \leq \left\lfloor |V(G)|/2 \right\rfloor$. 
\end{enumerate}
\end{Proposition}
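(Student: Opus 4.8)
The plan is to prove each of the four inequalities by an elementary counting or injection argument; all of them are standard and are recorded in \cite{HHKT}, so I only sketch the steps.

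I would dispose of part (2) first. Pick a matching $M$ of $G$ with $|M| = \text{match}(G)$. The edges in $M$ are pairwise disjoint, so $V(M)$ has exactly $2|M|$ vertices; since $V(M) \subseteq V(G)$ this gives $2\,\text{match}(G) \le |V(G)|$, and as $\text{match}(G) \in \mathbb{N}$ we conclude $\text{match}(G) \le \lfloor |V(G)|/2 \rfloor$.

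For part (1), the inequality $\text{min-match}(G) \le \text{match}(G)$ is immediate: a maximum matching is in particular maximal, so there is a maximal matching of cardinality $\text{match}(G)$, while $\text{min-match}(G)$ is by definition the least cardinality of a maximal matching. For $\text{match}(G) \le 2\,\text{min-match}(G)$ I would fix a maximal matching $N$ with $|N| = \text{min-match}(G)$ together with a maximum matching $M$, and use the observation recalled in the introduction that maximality of $N$ means exactly that $V(G)\setminus V(N)$ is independent, i.e.\ every edge of $G$ meets $V(N)$. In particular each $e \in M$ contains a vertex of $V(N)$; the edges of $M$ being pairwise disjoint, this yields $|M| \le |V(N)| = 2|N|$, which is the claimed bound.

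The only place where the "induced" hypothesis enters, and hence the step I would treat most carefully, is $\text{ind-match}(G) \le \text{min-match}(G)$. Fix an induced matching $M$ with $|M| = \text{ind-match}(G)$ and a maximal matching $N$ with $|N| = \text{min-match}(G)$; it suffices to construct an injection $\varphi \colon M \to N$. For $e \in M$: if $e \in N$, set $\varphi(e) = e$; otherwise maximality of $N$ forces $e$ to meet some $f \in N$, and I set $\varphi(e) = f$ (choosing one such $f$). To check injectivity, suppose $\varphi(e) = \varphi(e') = f$ with $e \ne e'$ in $M$. If $f$ equals one of $e, e'$, say $f = e \in N$, then by construction $\varphi(e') = e$ forces $e'$ to meet $e$, contradicting that $M$ is a matching. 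Otherwise $e, e' \notin N$ and both meet $f = \{c,d\}$; since $e \cap e' = \emptyset$, they meet $f$ in two distinct endpoints, so $f$ is an edge of $G$ meeting both $e$ and $e'$, contradicting that $M$ is an induced matching. Hence $\varphi$ is injective and $\text{ind-match}(G) \le \text{min-match}(G)$. None of this is genuinely difficult; the only real obstacle is the book-keeping in this last argument, in particular correctly handling the edges of $M$ that already lie in $N$.
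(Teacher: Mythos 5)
Your proof is correct. For part (2) you argue exactly as the paper does: a maximum matching $M$ has $|V(M)|=2|M|$ pairwise distinct vertices inside $V(G)$, whence $\text{match}(G)\le\lfloor|V(G)|/2\rfloor$. For part (1), however, the paper gives no argument at all --- it simply cites \cite[Proposition 2.1 and Remark 3.2]{HHKT} --- whereas you supply a complete self-contained proof. Your three steps are all sound: $\text{min-match}(G)\le\text{match}(G)$ because a maximum matching is maximal; $\text{match}(G)\le 2\,\text{min-match}(G)$ because maximality of $N$ forces every edge of a maximum matching $M$ to meet $V(N)$, and disjointness of the edges of $M$ then gives $|M|\le|V(N)|=2|N|$; and $\text{ind-match}(G)\le\text{min-match}(G)$ via the injection $\varphi\colon M\to N$, whose injectivity check correctly splits into the case where the common image $f$ coincides with one of $e,e'$ (contradicting that $M$ is a matching) and the case where $f$ joins an endpoint of $e$ to an endpoint of $e'$ (contradicting that $M$ is induced). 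What your route buys is independence from the external reference; what the paper's route buys is brevity, since these inequalities are standard and already recorded in \cite{HHKT}.
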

\begin{proof}
(1) : See \cite[Proposition 2.1 and Remark 3.2]{HHKT}. \\
\ \ (2) : Let $M$ be a matching of $G$ with $|M| = \text{match}(G)$. 
Then $|V(G)| \geq |V(M)| = 2\text{match}(G)$ by the definition of matching.  
Hence we have $\text{match}(G) \leq \left\lfloor |V(G)|/2 \right\rfloor$. 
\end{proof}

Next, we recall the definition of the $S$-suspension introduced in \cite{HKM}. 
A subset $S \subset V(G)$ is said to be an {\em independent set} of $G$ if $\{u, v\} \not\in E(G)$ for all $u, v \in S$. 
Note that the empty set $\emptyset$ is an independent set of $G$.   
For an independent set $S$ of $G$, we define the graph $G^{S}$ as follows: 
\begin{itemize}
	\item $V(G^{S}) = V(G) \cup \{w\}$, where $w$ is a new vertex. 
	\item $E(G^{S}) = \left\{ \{v, w\} : v \not\in S \right\}$. 
\end{itemize}
We call $G^{S}$ the {\em S-suspension} of $G$. 

\begin{Lemma}[{\cite[Lemma 1.5]{HKM}}]
\label{S-suspension}
Let $G$ be a finite simple graph on the vertex set $V(G)$. Suppose that $G$ has no isolated vertices. Let $S \subset V(G)$ be an independent set of $G$. Then ${\rm ind}\text{-}{\rm match}(G^{S}) = {\rm ind}\text{-}{\rm match}(G)$ holds. 
\end{Lemma}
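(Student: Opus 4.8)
The statement to prove is Lemma~\ref{S-suspension}: for a graph $G$ with no isolated vertices and an independent set $S \subset V(G)$, one has ${\rm ind}\text{-}{\rm match}(G^S) = {\rm ind}\text{-}{\rm match}(G)$.

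\medskip

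\textbf{Plan.}
The inequality ${\rm ind}\text{-}{\rm match}(G^S) \geq {\rm ind}\text{-}{\rm match}(G)$ is immediate: any induced matching $M$ of $G$ is still an induced matching of $G^S$, since passing to $G^S$ only adds edges incident to the new vertex $w$, and $w$ is not a vertex of any edge in $M$; an edge $\{v,w\}$ cannot connect two edges of $M$ because it meets only the single vertex $v$ among old vertices. So the work is entirely in the reverse inequality ${\rm ind}\text{-}{\rm match}(G^S) \leq {\rm ind}\text{-}{\rm match}(G)$.

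\medskip

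For the reverse inequality, I would start with an induced matching $M$ of $G^S$ of maximum size and produce from it an induced matching of $G$ of size at least $|M|$. Split into cases according to whether $w \in V(M)$. If $w \notin V(M)$, then every edge of $M$ is an edge of $G$, and $M$ is automatically an induced matching of $G$ as well (an inducing edge in $G$ is also one in $G^S$), so $|M| \leq {\rm ind}\text{-}{\rm match}(G)$ and we are done. If $w \in V(M)$, write the edge of $M$ through $w$ as $\{v_0, w\}$ with $v_0 \notin S$; let $M' = M \setminus \{\{v_0,w\}\}$, a set of edges of $G$ with $|M'| = |M| - 1$. The key point is then: since $G$ has no isolated vertices, $v_0$ has a neighbor $v_1$ in $G$, and I claim $M' \cup \{\{v_0, v_1\}\}$ — or a small modification thereof — is an induced matching of $G$ of size $|M|$. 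The vertex $v_0$ is not met by any edge of $M'$ (since in $G^S$ the edge $\{v_0,w\}$ together with its inducing property forces no edge of $M'$ to touch $v_0$, and $v_0\neq w$). One must check that $v_1$ is not in $V(M')$ and that no edge of $G$ connects $\{v_0,v_1\}$ to an edge of $M'$; if $v_1$ happens to collide with $V(M')$ or create a connecting edge, replace the analysis by choosing $v_1$ more carefully, or observe that such a collision would already have violated the induced-matching property of $M$ in $G^S$ via the path through $w$. Finally $M'$ itself stays an induced matching of $G$ because it was one in $G^S$.

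\medskip

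\textbf{Main obstacle.} The delicate part is the case $w \in V(M)$: one must verify that attaching a genuine $G$-edge at $v_0$ in place of the suspension edge $\{v_0, w\}$ does not destroy the induced-matching condition — i.e., that the newly introduced vertex $v_1$ is neither already used by $M'$ nor joined by an edge of $G$ to any endpoint of an edge in $M'$. The natural way to handle this is to translate a hypothetical violation back into $G^S$: a $G$-edge from $v_1$ (or $v_0$) to some $V(M')$-vertex, combined with edges to $w$, would contradict maximality or the induced property of $M$. Care is needed when $v_1 \in S$ or when $v_0$'s only available neighbor is constrained; the no-isolated-vertices hypothesis is exactly what guarantees at least one candidate $v_1$ exists. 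Once this bookkeeping is done, both inequalities combine to give the equality.
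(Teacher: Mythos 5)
The paper does not prove this lemma itself---it is quoted from \cite[Lemma 1.5]{HKM}---so I can only judge your argument on its own merits. Your easy direction ${\rm ind}\text{-}{\rm match}(G^S)\ge {\rm ind}\text{-}{\rm match}(G)$ and the subcase $w\notin V(M)$ are fine. The genuine gap is in the case $w\in V(M)$: you set $M'=M\setminus\{\{v_0,w\}\}$, attach a $G$-edge $\{v_0,v_1\}$, and then defer the whole verification with ``one must check \dots; if $v_1$ collides, choose $v_1$ more carefully, or translate the violation back into $G^S$ via $w$.'' Neither remedy closes the argument. If $v_0$ has a unique neighbour $v_1$ and some $G$-edge $\{v_1,u\}$ with $u\in V(M')$ exists, that edge touches neither $v_0$ nor $w$, so it does \emph{not} translate into a violation of the induced property of $M$ in $G^S$, and there is no freedom to ``choose $v_1$ more carefully.'' As written, the case is not proved. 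A telling symptom is that your argument never uses the independence of $S$, yet the lemma is false without it: for $G=P_4$ with vertices $a\text{--}b\text{--}c\text{--}d$ and $S=\{a,b\}$, the set $\{\{a,b\},\{d,w\}\}$ is an induced matching of $G^S$ of size $2$, while ${\rm ind}\text{-}{\rm match}(P_4)=1$.

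The missing idea is that $w\in V(M)$ forces $M'=\emptyset$. Indeed, let $e'=\{u_1,u_2\}\in M'$. Since $S$ is independent, at least one endpoint, say $u_1$, lies outside $S$, so $\{u_1,w\}\in E(G^S)$; this edge meets $e'$ at $u_1$ and meets $\{v_0,w\}$ at $w$, contradicting that $M$ is an induced matching of $G^S$. Hence $|M|=1$, and since $G$ has no isolated vertices it has an edge, so ${\rm ind}\text{-}{\rm match}(G)\ge 1=|M|$. With this observation your construction becomes trivially correct (you are just replacing a single suspension edge by any $G$-edge at $v_0$), and this is also exactly where both hypotheses---independence of $S$ and absence of isolated vertices---enter. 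Without stating and proving this fact, your write-up does not constitute a proof.
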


As the end of this subsection, we recall a classification of connected simple graphs $G$ with $\text{min-match}(G) = |V(G)|/2$ given by Arumugam--Velammal. 

\begin{Proposition}[{\cite[Theorem 2.1]{AV}}]
\label{min = n/2}
Let $n \geq 2$ be an even integer and let $G = (V(G), E(G))$ be a connected simple graph with $|V(G)| = n$. 
Assume that ${\rm{min}}\mathchar`-{\rm{match}}(G) = n/2$. 
Then $G$ is either $K_{n}$ or $K_{n/2, n/2}$. 
In particular, ${\rm{ind}}\mathchar`-{\rm{match}}(G) = 1$ and there is no connected simple graph $G$ with $\left( {\rm{ind}}\mathchar`-{\rm{match}}(G), {\rm{min}}\mathchar`-{\rm{match}}(G), {\rm match}(G) \right) = (p, n/2, n/2)$ for all $p \geq 2$. 
\end{Proposition}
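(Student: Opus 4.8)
The plan is to analyze a connected graph $G$ on $n = 2m$ vertices with $\text{min-match}(G) = m$, i.e., every maximal matching of $G$ is already perfect. First I would reformulate this hypothesis combinatorially: recalling the remark in the introduction that a matching $M$ is maximal iff $V(G) \setminus V(M)$ is independent, the condition $\text{min-match}(G) = m$ says that $G$ has no maximal matching of smaller size, equivalently \emph{every} matching $M$ that leaves an independent set of unmatched vertices must in fact leave no unmatched vertex. The cleanest way to exploit this is: if $M$ is any matching with $|M| < m$, then $V(G) \setminus V(M)$ contains at least two vertices and — since $M$ cannot be extended — those two (indeed all) unmatched vertices are pairwise non-adjacent. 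So the key structural consequence to extract is: \emph{whenever two vertices $u,v$ are both missed by some matching, then $\{u,v\} \notin E(G)$}.

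Next I would run an argument by contradiction, assuming $G$ is neither $K_n$ nor $K_{m,m}$, and produce a maximal matching of size $< m$. Concretely: $G$ certainly has a perfect matching $M_0 = \{e_1,\dots,e_m\}$ (take any maximal matching; by hypothesis it is perfect). If $G \neq K_n$, there exist two non-adjacent vertices; pick such a pair $a, b$, lying say in $e_i = \{a, a'\}$ and $e_j = \{b, b'\}$ (they can't be in the same edge since endpoints of an edge are adjacent). Now consider the matching $M' = M_0 \setminus \{e_i, e_j\} \cup \{\{a',b'\}\}$ \emph{if} $\{a', b'\} \in E(G)$: this is a matching of size $m-1$ missing exactly $a$ and $b$, which are non-adjacent, so it is maximal — contradiction. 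Hence we are forced to conclude $\{a', b'\} \notin E(G)$ as well, and more generally this kind of exchange argument should let me propagate non-edges. The heart of the proof is to show the non-adjacency relation, together with the existence of a perfect matching and connectivity, forces $G$ to be complete bipartite with equal parts: one shows the vertex set splits into two classes (roughly, $a$ and everything "swappable with" $a$'s side versus the rest), that each class is independent, and that every cross pair is an edge, with the two classes of equal size $m$ because $M_0$ pairs them up.

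The main obstacle is precisely this last step: converting the local exchange/non-adjacency information into the global statement that $G = K_{m,m}$, handling the bookkeeping of which vertex of each matched edge ends up on which side, and ruling out intermediate configurations. I would organize it as follows: fix the perfect matching $M_0$; define a graph $H$ on the edge set $\{e_1,\dots,e_m\}$ recording "conflicts", and use the exchange moves above to show that non-edges of $G$ come in a rigid pattern; then a short connectivity argument finishes it. For the final sentence of the Proposition, once $G \in \{K_n, K_{m,m}\}$ it is immediate that $\text{ind-match}(G) = 1$ (any two disjoint edges in either graph are joined by a further edge), so no connected graph realizes $(p, n/2, n/2)$ with $p \ge 2$; this part requires no further work beyond invoking the classification just proved.
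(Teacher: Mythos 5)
First, a point of comparison: the paper does not prove this proposition at all --- it is imported wholesale from Arumugam--Velammal \cite{AV}, and the only portion the authors actually use is the ``in particular'' clause, which follows at once from the classification because any two disjoint edges of $K_{n}$ or of $K_{n/2,n/2}$ are joined by a third edge. Your handling of that final clause is correct and matches what the paper implicitly does. So you are attempting to supply a proof the paper deliberately outsources, and the question is whether your sketch of the classification closes the argument. It does not.

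Two concrete problems. First, your stated ``key structural consequence'' --- that any two vertices missed by some matching are non-adjacent --- is false as written: in $K_4$ the single-edge matching misses two adjacent vertices. The statement you actually need (and the one you use correctly in the exchange step) is the other direction: a matching of size $<n/2$ whose set of missed vertices is independent would be a maximal matching that is too small, hence for every independent set $S$ with $|S|\ge 2$ the graph $G-S$ has no perfect matching. Second, and more seriously, the one lemma you do establish (if $a\not\sim b$ then their partners in the perfect matching $M_0$ satisfy $a'\not\sim b'$) is only the opening move. The entire content of the theorem is the global step you defer with ``should let me propagate non-edges,'' ``handling the bookkeeping,'' and ``a short connectivity argument finishes it.'' That step is not routine: one must show the non-adjacency relation organizes the vertices into exactly two independent classes with all cross pairs present, rule out intermediate configurations (e.g.\ complements that are unions of several cliques), and invoke connectivity to exclude examples like $K_4\sqcup K_4$, in which every maximal matching is also perfect. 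This is essentially Sumner's classification of randomly matchable graphs, whose proof is a sustained exchange/alternating-path case analysis. As written, your proposal is a reasonable plan with the decisive step missing; to be acceptable it must either carry out that analysis in full or, as the paper does, simply cite \cite{AV}.
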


\subsection{Induced subgraph}
Let $W$ be a subset of $V(G)$. 
We define the {\em induced subgraph} $G_{W}$ of $G$ on $W$ as follows:  
\begin{itemize}
	\item $V(G_{W}) = W$. 
	\item $E(G_{W}) = \{ \{u, v\} \in E(G) \mid u, v \in W \}$. 
\end{itemize}
For a vertex $v \in V(G)$, we denote $G_{V(G) \setminus \{v\}}$ by $G \setminus v$. 

\begin{Proposition}\label{G setminus v}
Let $G$ be a finite simple graph on the vertex set $V(G)$ and $v \in V(G)$. Then 
\begin{enumerate}
	\item[$(1)$] ${\rm{ind}}\mathchar`-{\rm{match}}(G \setminus v) \leq {\rm{ind}}\mathchar`-{\rm{match}}(G)$. 
	\item[$(2)$] ${\rm{min}}\mathchar`-{\rm{match}}(G \setminus v) \leq {\rm{min}}\mathchar`-{\rm{match}}(G)$. 
	\item[$(3)$] ${\rm{match}}(G \setminus v) \leq {\rm{match}}(G)$. 
\end{enumerate}
\end{Proposition}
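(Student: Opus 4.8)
The plan is to handle the three inequalities separately, since each concerns a different kind of optimal matching in $G$, but in every case the underlying principle is the same: take an optimal object in $G \setminus v$ and observe that it is still a legitimate object (of the appropriate type) in $G$. The key point is that passing from $G$ to the induced subgraph $G \setminus v$ only \emph{removes} edges, so every matching of $G \setminus v$ is a matching of $G$, and moreover an induced matching of $G \setminus v$ remains an induced matching of $G$ because the ``no connecting edge'' condition is a statement about edges of the ambient graph and deleting edges can only make it easier to satisfy.

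For $(1)$ and $(3)$, I would argue directly. Let $M$ be an induced matching of $G \setminus v$ with $|M| = {\rm ind}\text{-}{\rm match}(G \setminus v)$. Since $E(G \setminus v) \subseteq E(G)$, the set $M$ is a matching of $G$; I must check it is still induced. If $e_i, e_j \in M$ and some $e \in E(G)$ meets both, then the endpoints of $e$ lie in $V(M) \subseteq V(G) \setminus \{v\}$, so $e \in E(G \setminus v)$, contradicting that $M$ is an induced matching of $G \setminus v$. Hence $M$ is an induced matching of $G$ and $(1)$ follows. For $(3)$, if $M$ is a matching of $G \setminus v$ of maximum size, it is a matching of $G$, so ${\rm match}(G) \geq |M| = {\rm match}(G \setminus v)$.

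The statement $(2)$ is the one requiring care, and I expect it to be the main obstacle: a maximal matching of $G \setminus v$ need not be maximal in $G$, because the edge from $v$ to an unmatched neighbour could be added. So I would instead take a maximal matching $M$ of $G$ with $|M| = {\rm min}\text{-}{\rm match}(G)$ and modify it to produce a maximal matching of $G \setminus v$ of size $\leq |M|$. If no edge of $M$ contains $v$, then $M \subseteq E(G \setminus v)$ is still a matching there; since $V(G \setminus v) \setminus V(M) = (V(G) \setminus V(M)) \setminus \{v\}$ is a subset of the independent set $V(G) \setminus V(M)$, it is independent in $G \setminus v$, so $M$ is maximal in $G \setminus v$ and ${\rm min}\text{-}{\rm match}(G \setminus v) \leq |M|$. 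If some edge $\{v, u\} \in M$, set $M' = M \setminus \{\{v, u\}\}$, a matching of $G \setminus v$ with $|M'| = |M| - 1$. Now $V(G \setminus v) \setminus V(M') = (V(G) \setminus V(M)) \cup \{u\}$, which may fail to be independent in $G \setminus v$ (precisely when $u$ has a neighbour $w \in V(G) \setminus V(M)$ in $G \setminus v$). If it is independent, $M'$ is maximal and we are done with room to spare. If not, I would greedily extend $M'$ to a maximal matching $M''$ of $G \setminus v$; any matching containing $M'$ uses at least one new edge incident to $u$ but is obtained by adding edges among the vertices of $V(G \setminus v) \setminus V(M')$, and I claim $|M''| \leq |M|$. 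Indeed, extending $M'$ can add at most... here one needs the sharper observation: the vertices available to $M''$ beyond those in $M'$ form the set $(V(G)\setminus V(M)) \cup \{u\}$, an independent set of $G$ together with one extra vertex $u$, so any matching on this vertex set has size at most $1$ (an edge must use $u$). Hence $|M''| \leq |M'| + 1 = |M|$, giving ${\rm min}\text{-}{\rm match}(G \setminus v) \leq |M''| \leq |M| = {\rm min}\text{-}{\rm match}(G)$, which completes $(2)$ and the proof.
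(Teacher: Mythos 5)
Your proposal is correct and follows essentially the same route as the paper: parts (1) and (3) by observing that (induced) matchings of $G \setminus v$ remain (induced) matchings of $G$, and part (2) by starting from a minimum maximal matching $M$ of $G$ and splitting on whether $v \in V(M)$. The only cosmetic difference is in the sub-case where $v$ is matched to a vertex $u$: the paper explicitly swaps in an edge $\{u, w'\}$ to an unmatched neighbour when one exists, whereas you delete $\{v,u\}$ and greedily extend to a maximal matching, bounding the extension by one edge because the uncovered vertices form an independent set together with the single vertex $u$ --- both arguments yield a maximal matching of $G\setminus v$ of size at most $|M|$.
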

\begin{proof}
(1), (3) : it is easy to prove since induced matchings (resp. matchings) of $G \setminus \{v\}$ are induced matching (resp. matching) of $G$. \\
\ \ (2) : Let $M \subset E(G)$ be a maximal matching with $|M| = \text{min-match}(G)$. 
Note that $V(G) \setminus V(M)$ is an independent set of $G$. 
\begin{itemize}
	\item Assume that $v \in V(G) \setminus V(M)$. 
	Then $\{V(G) \setminus V(M)\} \setminus \{v\}$ is an independent set of $G \setminus v$. 
	Hence $M$ is a maximal matching of $G \setminus v$ since 
	$V(G \setminus v) \setminus V(M) = \{V(G) \setminus V(M)\} \setminus \{v\}$. 
	Thus we have 
	$\text{min-match}(G \setminus v) \leq |M| = \text{min-match}(G)$. 
	\item Assume that $v \not\in V(G) \setminus V(M)$. Then $v \in V(M)$ and there exists an edge $\{v, w\} \in M$.  
	\begin{itemize}
		\item Assume that there exists a vertex $w' \in V(G) \setminus V(M)$ such that $\{ w, w' \} \in E(G)$. Then $\left\{ V(G) \setminus V(M) \right\} \setminus \{w'\}$ is an independent set of $G \setminus v$. 
		Now we put $M_{1} = (M \setminus \{v, w\}) \cup \{w, w'\}$. Since $V(G \setminus v) \setminus V(M_{1}) = \left\{ V(G) \setminus V(M) \right\} \setminus \{w'\}$, it follows that $M_{1}$ is a maximal matching of  $G \setminus v$. Hence we have $\text{min-match}(G \setminus v) \leq |M_{1}| = |M| = \text{min-match}(G)$. 
		\item Assume that $\{w, w''\} \not\in E(G)$ for all $w'' \in V(G) \setminus V(M)$. 
		Then $\{ V(G) \setminus V(M) \} \cup \{w\}$ is an independent set of $G \setminus v$. 
		Put $M_{2} = M \setminus \{v, w\}$. Then $M_{2}$ is a maximal matching of $G \setminus v$ since $V(G \setminus v) \setminus V(M_{2}) = \{ V(G) \setminus V(M) \} \cup \{w\}$. Hence one has $\text{min-match}(G \setminus v) \leq |M_{2}| = |M| - 1 < \text{min-match}(G)$.  
	\end{itemize}
\end{itemize}
Therefore we have the desired conclusion. 
\end{proof}

As a corollary of \ref{G setminus v}, one has

\begin{Corollary}\label{induced-subgraph}
Let $G$ be a finite simple graph on the vertex set $V(G)$ and let $W \subset V(G)$ be a subset. 
Then one has 
\begin{enumerate}
	\item[$(1)$] ${\rm ind}$-${\rm match}(G_{W}) \leq {\rm ind}$-${\rm match}(G)$.
	\item[$(2)$] ${\rm min}$-${\rm match}(G_{W}) \leq {\rm min}$-${\rm match}(G)$.  
	\item[$(3)$] ${\rm match}(G_{W}) \leq {\rm match}(G)$.  
\end{enumerate}
\end{Corollary}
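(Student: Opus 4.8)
The statement to prove is Corollary~\ref{induced-subgraph}, namely that passing to an induced subgraph $G_W$ can only decrease each of the three matching invariants. The natural approach is to deduce this directly from Proposition~\ref{G setminus v} by an induction on the number of vertices we delete. Write $V(G) \setminus W = \{v_1, \dots, v_k\}$ and set $G^{(0)} = G$, $G^{(j)} = G^{(j-1)} \setminus v_j$ for $1 \le j \le k$. Since deleting a vertex from an induced subgraph again yields an induced subgraph, we have $G^{(k)} = G_W$.

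The induction itself is immediate: Proposition~\ref{G setminus v}(1) gives $\mathrm{ind\text{-}match}(G^{(j)}) \le \mathrm{ind\text{-}match}(G^{(j-1)})$ for each $j$, and chaining these $k$ inequalities yields $\mathrm{ind\text{-}match}(G_W) = \mathrm{ind\text{-}match}(G^{(k)}) \le \mathrm{ind\text{-}match}(G^{(0)}) = \mathrm{ind\text{-}match}(G)$, which is part (1). Parts (2) and (3) follow in exactly the same way from Proposition~\ref{G setminus v}(2) and (3) respectively. When $k = 0$, i.e.\ $W = V(G)$, the statement is trivial since $G_W = G$.

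The one point that deserves an explicit remark — and the only place the argument could be said to have a ``hard part,'' though it is really just a bookkeeping observation — is the assertion that $(G_{W'})\setminus v = G_{W' \setminus \{v\}}$ for any $W' \subseteq V(G)$ and $v \in W'$. This is clear from the definition of the induced subgraph: both graphs have vertex set $W' \setminus \{v\}$, and an edge $\{u_1, u_2\}$ with $u_1, u_2 \in W' \setminus \{v\}$ lies in one exactly when it lies in $E(G)$, hence exactly when it lies in the other. With this identification in hand the telescoping induction goes through with no further subtlety, so I expect no genuine obstacle here; the content of the corollary is entirely carried by Proposition~\ref{G setminus v}.
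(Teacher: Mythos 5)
Your proof is correct and is exactly the argument the paper intends: the paper states this result as an immediate corollary of Proposition~\ref{G setminus v}, obtained by deleting the vertices of $V(G)\setminus W$ one at a time, which is precisely your telescoping induction.
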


\begin{Lemma}\label{disconnected}
Let $G$ be a finite disconnected simple graph and $G_{1}, \ldots, G_{s}$ $(s \geq 2)$ the connected components of $G$. 
Then we have 
\begin{enumerate}
	\item[$(1)$] $\displaystyle {\rm{ind}}\mathchar`-{\rm{match}}(G) = \sum_{i = 1}^{s} {\rm{ind}}\mathchar`-{\rm{match}}(G_{i})$. 
	\item[$(2)$] $\displaystyle {\rm{min}}\mathchar`-{\rm{match}}(G) = \sum_{i = 1}^{s} {\rm{min}}\mathchar`-{\rm{match}}(G_{i})$.
	\item[$(3)$] $\displaystyle {\rm{match}}(G) = \sum_{i = 1}^{s} {\rm{match}}(G_{i})$.
\end{enumerate}
\end{Lemma}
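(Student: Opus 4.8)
The statement to prove is Lemma~\ref{disconnected}: for a disconnected graph $G$ with connected components $G_1,\dots,G_s$, each of the three invariants ind-match, min-match, and match is additive over the components.

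\medskip

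The plan is to treat the three equalities in parallel, since the underlying principle is the same: a matching (resp.\ induced matching, resp.\ maximal matching) of $G$ decomposes uniquely as a disjoint union of matchings (resp.\ induced matchings, resp.\ maximal matchings) of the $G_i$, because no edge of $G$ joins two different components. First I would fix notation: write $V(G) = \bigsqcup_{i=1}^s V(G_i)$ and $E(G) = \bigsqcup_{i=1}^s E(G_i)$, and for any edge set $M \subseteq E(G)$ set $M_i = M \cap E(G_i)$, so that $M = \bigsqcup_i M_i$ and $|M| = \sum_i |M_i|$.

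\medskip

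For (3) (match), the key observations are: (a) if $M$ is a matching of $G$ then each $M_i$ is a matching of $G_i$, so $\mathrm{match}(G) \le \sum_i \mathrm{match}(G_i)$; (b) conversely, choosing a maximum matching $M_i$ in each $G_i$ and taking their union gives a matching of $G$ (disjointness across components is automatic since the vertex sets are disjoint), so $\mathrm{match}(G) \ge \sum_i \mathrm{match}(G_i)$. For (1) (ind-match), the same decomposition works once one notes that the ``no connecting edge'' condition in the definition of induced matching is vacuous for edges lying in different components and reduces to the condition within $G_i$ otherwise; hence $M$ is an induced matching of $G$ iff each $M_i$ is an induced matching of $G_i$, and both inequalities follow as before. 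For (2) (min-match), I would use the characterization recalled in the preliminaries: a matching $M$ is maximal iff $V(G) \setminus V(M)$ is an independent set. Since $V(G)\setminus V(M) = \bigsqcup_i \bigl(V(G_i)\setminus V(M_i)\bigr)$ and a subset of $V(G)$ meeting distinct components is independent iff its intersection with each component is independent, we get: $M$ is a maximal matching of $G$ iff each $M_i$ is a maximal matching of $G_i$. Taking minimum cardinalities then yields $\mathrm{min}\text{-}\mathrm{match}(G) = \sum_i \mathrm{min}\text{-}\mathrm{match}(G_i)$: the $\le$ direction by taking a minimum maximal matching in each component and forming the union (which is maximal in $G$ by the iff), and the $\ge$ direction because any maximal matching $M$ of $G$ restricts to maximal matchings $M_i$ of the $G_i$, so $|M| = \sum_i |M_i| \ge \sum_i \mathrm{min}\text{-}\mathrm{match}(G_i)$.

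\medskip

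I do not expect any serious obstacle here; the only point requiring a little care is part (2), specifically the equivalence ``$M$ maximal in $G$ $\iff$ every $M_i$ maximal in $G_i$'', which is where the maximality characterization via independent sets and the disjointness of components must be combined correctly. Everything else is a routine bookkeeping argument splitting an edge set along the component partition.
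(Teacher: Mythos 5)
Your proposal is correct and follows essentially the same route as the paper: decompose any (induced, maximal) matching along the components, check that restrictions and unions preserve the relevant property, and handle maximality via the characterization that $M$ is maximal iff $V(G)\setminus V(M)$ is independent. The only cosmetic differences are that the paper reduces to $s=2$ and proves the ``restriction of a maximal matching is maximal'' step by a direct extension argument rather than through the independent-set characterization, neither of which changes the substance.
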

\begin{proof}
It is enough to show the case $s = 2$. \\
\ \ (1) : For $1 \leq i \leq 2$, let $M_{i} \subset E(G_{i})$ be an induced matching of $G_{i}$ with $|M_{i}| = \text{ind-match}(G_{i})$. 
Since $M_{1} \cup M_{2}$ is an induced matching of $G$ and $M_{1} \cap M_{2} = \emptyset$, one has 
\[
\text{ind-match}(G_{1}) + \text{ind-match}(G_{2}) = |M_{1}| + |M_{2}| = |M_{1}| \cup |M_{2}| \leq \text{ind-match}(G). 
\]
\ \ Next, we show the opposite inequality. 
Let $M$ be an induced matching of $G$ with $|M| = \text{ind-match}(G)$. 
Since each $e \in M$ is an edge of $G_{1}$ or $G_{2}$ and $E(G_{1}) \cap E(G_{2}) = \emptyset$, there exist $M_{1} \subset E(G_{1})$ and $M_{2} \subset E(G_{2})$ such that $M_{1} \cup M_{2} = M$ and $M_{1} \cap M_{2} = \emptyset$.  
Note that $M_{1}$ (resp. $M_{2}$) is an induced matching of $G_{1}$ (resp. $G_{2}$). 
Hence it follows that $|M_{i}| \leq \text{ind-match}(G_{i})$ for $i = 1, 2$. 
Thus one has
\[
\text{ind-match}(G) = |M| = |M_{1} \cup M_{2}| = |M_{1}| + |M_{2}| \leq \text{ind-match}(G_{1}) + \text{ind-match}(G_{2}). 
\] 
Therefore we have the desired conclusion. \\ 
\ \ (2) : Let $M'_{i} \subset E(G_{i})$ be a maximal matching of $G_{i}$ with $|M'_{i}| = \text{min-match}(G_{i})$ for $i = 1, 2$. 
Then $M'_{1} \cap M'_{2} = \emptyset$ and $V(G_{i}) \setminus V(M'_{i})$ is an independent set of $G_{i}$ for $i = 1, 2$. 
Since $G_{1}$ and $G_{2}$ are connected components of $G$, it follows that $\{ V(G_{1}) \setminus V(M'_{1}) \} \cup \{ V(G_{2}) \setminus V(M'_{2}) \}$ is an independent set of $G$. 
Hence $M'_{1} \cup M'_{2}$ is a maximal matching of $G$ since $V(G) \setminus V(M'_{1} \cup M'_{2}) = \{ V(G_{1}) \setminus V(M'_{1}) \} \cup \{ V(G_{2}) \setminus V(M'_{2}) \}$ . 
Thus we have 
\[
\text{min-match}(G_{1}) + \text{min-match}(G_{2}) = |M'_{1}| + |M'_{2}| = |M'_{1} \cup M'_{2}| \geq \text{min-match}(G). 
\]
\ \ Next, we show the opposite inequality. Let $M'$ be a maximal matching of $G$ with $|M'| = \text{min-match}(G)$. Since each $e \in M'$ is an edge of $G_{1}$ or $G_{2}$ and $E(G_{1}) \cap E(G_{2}) = \emptyset$, there exist $M'_{1} \subset E(G_{1})$ and $M'_{2} \subset E(G_{2})$ such that $M' = M'_{1} \cup M'_{2}$ and $M'_{1} \cap M'_{2} = \emptyset$. 
Note that $M'_{i}$ is a matching of $G_{i}$ for $i = 1, 2$.  
Assume that $M'_{1}$ is not maximal. Then there exists $e' \in E(G_{1}) \setminus M'_{1}$ such that $M'_{1} \cup \{e'\}$ is a matching of $G_{1}$. 
However this is a contradiction because $M' \cup \{e'\}$ is a matching of $G$ in this situation. 
Hence we have $M'_{1}$ is a maximal matching of $G_{1}$. Similarly, we also have $M'_{2}$ is a maximal matching of $G_{2}$. 
Thus it follows that 
\[
\text{min-match}(G_{1}) + \text{min-match}(G_{2}) \leq |M'_{1}| + |M'_{2}| = |M'_{1} \cup M'_{2}| = \text{min-match}(G). 
\]
Therefore we have the desired conclusion. \\
\ \ (3) : We can prove this by replacing ``an induced matching" with ``a matching" and ``$\text{ind-match}$" with ``$\text{match}$" in the proof of (1). 
\end{proof}

\begin{Lemma}\label{leaf-edge}
Let $G = (V(G), E(G))$ be a finite simple graph.  
Assume that there exist two edges $\{u, w\}, \{v, w\} \in E(G)$ such that $\deg(u) = \deg(v) = 1$. 
Then 
\begin{enumerate}
	\item[$(1)$] ${\rm ind}$-${\rm match}(G \setminus v) = {\rm ind}$-${\rm match}(G)$.
	\item[$(2)$] ${\rm min}$-${\rm match}(G \setminus v) = {\rm min}$-${\rm match}(G)$.  
	\item[$(3)$] ${\rm match}(G \setminus v) = {\rm match}(G)$.  
\end{enumerate}
\end{Lemma}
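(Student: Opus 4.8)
The plan is to exploit the symmetry between $u$ and $v$. Since $\deg(u) = \deg(v) = 1$ and both $u$ and $v$ are adjacent to $w$, the only edges of $G$ meeting $\{u, v\}$ are $\{u, w\}$ and $\{v, w\}$, and in particular $\{u, v\} \notin E(G)$. Hence the transposition $\sigma$ of $V(G)$ that interchanges $u$ and $v$ and fixes all other vertices is an automorphism of $G$, so $\sigma$ carries matchings to matchings, induced matchings to induced matchings, and maximal matchings to maximal matchings, preserving cardinalities. Since the inequalities ``$\leq$'' in (1), (2) and (3) are immediate from Proposition \ref{G setminus v}, it remains only to establish the reverse inequalities.

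For (1) and (3), I would take a matching (resp. an induced matching) $M$ of $G$ of maximum size. If $v \notin V(M)$, then $M$ is already a matching (resp. an induced matching) of $G \setminus v$. If $v \in V(M)$, then the unique edge of $M$ containing $v$ is $\{v, w\}$, and $\sigma(M)$ is a matching (resp. an induced matching) of $G$ of the same cardinality that avoids $v$, hence lies in $G \setminus v$. In either case $\text{match}(G \setminus v) \geq \text{match}(G)$, resp. $\text{ind-match}(G \setminus v) \geq \text{ind-match}(G)$, and combining with Proposition \ref{G setminus v} gives the equalities.

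For (2) it suffices to prove $\text{min-match}(G) \leq \text{min-match}(G \setminus v)$, and here some care is needed with maximality. Let $M$ be a maximal matching of $G \setminus v$ with $|M| = \text{min-match}(G \setminus v)$; I claim $M$ is also a maximal matching of $G$. First, $w \in V(M)$, for otherwise $u \notin V(M)$ as well (the only edge at $u$ is $\{u, w\}$), and then $M \cup \{\{u, w\}\}$ would be a strictly larger matching of $G \setminus v$, contradicting maximality of $M$. Now $V(G) \setminus V(M) = (V(G \setminus v) \setminus V(M)) \cup \{v\}$; the set $V(G \setminus v) \setminus V(M)$ is independent in $G$ since $G$ and $G \setminus v$ have the same edges among vertices other than $v$, while the only neighbor $w$ of $v$ lies in $V(M)$, so $V(G) \setminus V(M)$ is an independent set of $G$. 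Hence $M$ is a maximal matching of $G$ and $\text{min-match}(G) \leq |M| = \text{min-match}(G \setminus v)$. The one nontrivial observation in the whole argument is exactly this forced saturation of $w$ in a minimum maximal matching of $G \setminus v$; the rest is a direct check using $\sigma$ and Proposition \ref{G setminus v}.
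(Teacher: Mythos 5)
Your proof is correct and follows essentially the same route as the paper: the automorphism $\sigma$ swapping $u$ and $v$ is just a repackaging of the paper's move of replacing the edge $\{v,w\}$ by $\{u,w\}$ in a maximum (induced) matching, and your argument for (2) --- that $w$ must be saturated in any maximal matching of $G\setminus v$, so a minimum maximal matching of $G\setminus v$ is already maximal in $G$ --- is exactly the paper's. The only cosmetic difference is that you invoke Proposition \ref{G setminus v} directly where the paper cites its Corollary \ref{induced-subgraph} for the easy inequalities.
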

\begin{proof}
By virtue of Corollary \ref{induced-subgraph}, it is enough to show $``\geq"$ since $G \setminus v$ is an induced subgraph of $G$. \\
\ \ (1) : Let $M$ be an induced matching of $G$ with $|M| = \text{ind-match}(G)$. 
\begin{itemize}
	\item Assume that $\{v, w\} \not\in M$. Then $M$ is also an induced matching of $G \setminus v$. Hence one has $\text{ind-match}(G \setminus v) \geq |M| = \text{ind-match}(G)$. 
	\item Assume that $\{v, w\} \in M$. Then $\left(M \setminus \{v, w\}\right) \setminus \{u, w\}$ is an induced matching of $G \setminus v$. Hence one has $\text{ind-match}(G \setminus v) \geq |\left(M \setminus \{v, w\}\right) \setminus \{u, w\}| = |M| = \text{ind-match}(G)$. 
\end{itemize}
\ \ (2) : Let $M'$ be a maximal matching of $G \setminus v$ with $M' = \text{min-match}(G \setminus v)$. 
If $w \not\in V(M')$, then $u \not\in V(M')$ since $u$ is only adjacent to $w$. Hence $M' \cup \{u, w\}$ is a matching of $G \setminus v$, but this contradicts the maximality of $M'$. 
Thus $w \in V(M')$ and $M'$ is a maximal matching of $G$. Therefore we have $\text{min-match}(G \setminus v) \geq |M'| = \text{min-match}(G)$. \\
\ \ (3) : We can prove this by replacing ``an induced matching" with ``a matching" and ``$\text{ind-match}$" with ``$\text{match}$" in the proof of (1). 
\end{proof}

\subsection{Special families of connected simple graphs $G_{a, b, c}^{(1)}, \ G_{a, b, c, d, e}^{(2)}$ and $G_{a, b, c}^{(3)}$\ }

In this subsection, we introduce three families of connected simple graphs $G_{a, b, c}^{(1)}, \ G_{a, b, c, d, e}^{(2)}$ and $G_{a, b, c}^{(3)}$. 
These graphs play an important role in the proof of main results.   

First, we introduce the graph $G_{a, b, c}^{(1)}$. \\

\noindent \underline{$G_{a, b, c}^{(1)} : $} \ Let $a, b, c$ be integers with $a \geq 1$, $a \geq b \geq 0$ and $c \geq 0$ and set
\[
X = \{x_{1}, x_{2}, \ldots, x_{2a}\}, \ Y = \{y_{1}, y_{2}, \ldots, y_{2b}\}, \ Z = \{z_{1}, z_{2}, \ldots, z_{c}\}. 
\]
Note that we consider $Y = \emptyset$ if $b = 0$ and $Z = \emptyset$ if $c = 0$. 
We define the graph $G_{a, b, c}^{(1)}$ as follows; see Figure \ref{fig:G1}: 

\begin{itemize}
	\item $V\left( G_{a, b, c}^{(1)} \right) = X \cup Y \cup Z$, 
	\item $\displaystyle E\left( G_{a, b, c}^{(1)} \right) = \left\{ \bigcup_{1 \leq i < j \leq 2a}\{x_{i}, x_{j}\} \right\} \cup \left\{ \bigcup_{i = 1}^{2b} \{x_{i}, y_{i}\}  \right\} \cup \left\{ \bigcup_{i = 1}^{c} \{x_{2a}, z_{i}\} \right\}$. 
\end{itemize}

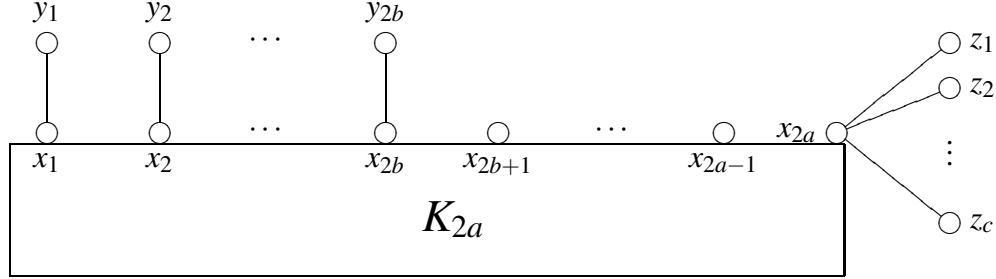
\begin{figure}[htbp]
\centering

\begin{xy}
	\ar@{} (0,0);(20, -16)  *++! U{x_{1}} *\cir<4pt>{} = "X1"
	\ar@{} (0,0);(35, -16)  *++! U{x_{2}} *\cir<4pt>{} = "X2"
	\ar@{} (0,0);(65, -16)  *++! U{x_{2b}} *\cir<4pt>{} = "X2b"
	\ar@{-} "X1";(20, -4) *++! D{y_{1}} *\cir<4pt>{} = "Y1";
	\ar@{-} "X2";(35, -4) *++!D{y_{2}} *\cir<4pt>{} = "Y2";
	\ar@{} "X1"; (49, 0) *++!U{\cdots}
	\ar@{} "X1"; (49, -12) *++!U{\cdots}
	\ar@{-} "X2b";(65, -4) *++!D{y_{2b}} *\cir<4pt>{} = "Y2b";
	\ar@{} (0,0);(80, -16)  *++! U{x_{2b + 1}} *\cir<4pt>{} = "X2b+1";
	\ar@{} (0,0);(110, -16)  *++! U{x_{2a - 1}} *\cir<4pt>{} = "X2a-1";
	\ar@{} (0,0);(125, -16)  *++! R{x_{2a}} *\cir<4pt>{} = "X2a";
	\ar@{-} "X2a";(140, -4) *++! L{z_{1}} *\cir<4pt>{} = "Z1";
	\ar@{-} "X2a";(140, -10) *++!L{z_{2}} *\cir<4pt>{} = "Z2";
	\ar@{-} "X2a";(140, -28) *++!L{z_{c}} *\cir<4pt>{} =  "Zc";
	\ar@{} "X1"; (140, -23) *++!D{\vdots}
	\ar@{} "X1"; (95, -12) *++!U{\cdots}
	\ar@{-} (15,-17.5); (126, -17.5);
	\ar@{-} (15,-17.5); (15, -35);
	\ar@{-} (15,-35); (126, -35);
	\ar@{-} (126,-17.5); (126, -35)
	\ar@{} (0,0);(74,-27.5) *{\Large{\text{$K_{2a}$}}};
\end{xy}

  \caption{The graph $G_{a, b, c}^{(1)}$}
  \label{fig:G1}
\end{figure}

\begin{Lemma}\label{G1}
Let $G_{a, b, c}^{(1)}$ be the graph as above. Then we have 
\begin{enumerate}
	\item[$(1)$] $\left| V\left(G_{a, b, c}^{(1)}\right) \right| = 2a + 2b + c$. 
	\item[$(2)$] $\text{ind-match}\left(G_{a, b, c}^{(1)}\right) = 1$. 
	\item[$(3)$] $\text{min-match}\left(G_{a, b, c}^{(1)}\right) = a$. 
	\item[$(4)$] $\text{match}\left(G_{a, b, c}^{(1)}\right) = a + b$. 
\end{enumerate}
\end{Lemma}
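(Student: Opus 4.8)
The plan is to verify each of the four claims directly from the definition of $G_{a,b,c}^{(1)}$, exploiting the structure visible in Figure \ref{fig:G1}: a clique $K_{2a}$ on $X$, a "pendant" edge $\{x_i,y_i\}$ attached to each of the first $2b$ vertices of the clique, and $c$ further leaves $z_1,\dots,z_c$ all attached to the single clique vertex $x_{2a}$.

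For $(1)$, the vertex set is the disjoint union $X\cup Y\cup Z$ with $|X|=2a$, $|Y|=2b$, $|Z|=c$, so $|V(G_{a,b,c}^{(1)})|=2a+2b+c$, and I would just remark the union is disjoint by construction. For $(4)$, the matching $M=\{\{x_i,y_i\}:1\le i\le 2b\}\cup\{\{x_{2b+1},x_{2b+2}\},\dots,\{x_{2a-1},x_{2a}\}\}$ has size $2b+(a-b)=a+b$, so $\mathrm{match}\ge a+b$; for the reverse inequality, note $\mathrm{match}\le\lfloor|V|/2\rfloor$ is too weak, so instead I would argue that $Z$ is an independent set each of whose vertices has $x_{2a}$ as its only neighbour, so in any matching at most one $z_i$ is covered and, after removing that edge and $x_{2a}$, what remains lies in the graph on $X\setminus\{x_{2a}\}$ together with the $y_i$'s; a cleaner route is: any matching uses at most $b$ edges meeting $Y\cup(\text{the }x_i\text{ with }i\le 2b)$ "wastefully"—more directly, delete the $c$ leaves $z_i$ one at a time using Lemma \ref{leaf-edge}(3) (applicable once $c\ge 2$) to reduce to $c\le 1$, then observe $G_{a,b,0}^{(1)}$ has $2a+2b$ vertices and the matching above is perfect, giving $\mathrm{match}=a+b$, and the single extra leaf in the $c=1$ case cannot be added to a perfect matching of the rest.

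For $(2)$, $\mathrm{ind\text{-}match}\ge 1$ is clear since $a\ge1$ forces at least one edge. For the upper bound, I would show any two edges of $G_{a,b,c}^{(1)}$ are "connected" by a third edge: every edge meets $X$, and any two vertices of $X$ are joined by an edge of the clique $K_{2a}$ (here $a\ge1$ is used, and when $2a=2$ the two edges would have to share the unique pair $\{x_1,x_2\}$), so no induced matching of size $2$ exists; the only mild case-check is when both edges are incident to the same clique vertex, which is even easier. Alternatively, one can invoke Lemma \ref{S-suspension}/\ref{leaf-edge} to strip the pendant vertices and reduce to $K_{2a}$, for which $\mathrm{ind\text{-}match}=1$ is standard.

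The main obstacle is $(3)$, $\mathrm{min\text{-}match}=a$. For "$\le a$": I would exhibit the maximal matching $M=\{\{x_1,x_2\},\{x_3,x_4\},\dots,\{x_{2a-1},x_{2a}\}\}$ of size $a$ and check $V(G)\setminus V(M)=Y\cup Z$ is independent—indeed no $y_i$ is adjacent to any $y_j$ or $z_k$ or (its partner $x_i$ is covered) anything outside $V(M)$, and likewise for the $z_k$'s. For "$\ge a$": I must show every maximal matching has at least $a$ edges. The key point is that a maximal matching $M$ leaves $V(G)\setminus V(M)$ independent, hence $M$ must cover at least one endpoint of every edge; in particular, among the $2a$ clique vertices $x_1,\dots,x_{2a}$, at most one can be left uncovered (two uncovered clique vertices would be adjacent), so $M$ covers $\ge 2a-1$ vertices of $X$. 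Each edge of $M$ covers at most $2$ vertices of $X$ (edges $\{x_i,y_i\}$ and $\{x_{2a},z_k\}$ cover exactly one), so $|M|\ge\lceil(2a-1)/2\rceil=a$. This parity/covering argument is the crux; I would write it carefully, noting that the edge possibly covering the "odd" clique vertex via a pendant still forces the bound. Combining the two directions gives $\mathrm{min\text{-}match}=a$, completing the proof.
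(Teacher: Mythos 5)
Your proposal is correct, and parts (1), (2) and (4) follow essentially the same route as the paper: the same explicit matching of size $a+b$, the same reduction via Lemma \ref{leaf-edge}(3) to $c\le 1$ followed by the bound $\text{match}\le\lfloor |V|/2\rfloor$, and the same observation that every edge meets the clique $X$ so no two disjoint edges can form an induced matching. The only genuine divergence is in (3): for the lower bound $\text{min-match}\ge a$ the paper simply applies Corollary \ref{induced-subgraph}(2) to the induced subgraph $\bigl(G^{(1)}_{a,b,c}\bigr)_X=K_{2a}$ and quotes $\text{min-match}(K_{2a})=a$, whereas you argue directly that a maximal matching $M$ leaves an independent set uncovered, hence misses at most one vertex of the clique $X$, so $2|M|\ge |V(M)\cap X|\ge 2a-1$ and $|M|\ge a$. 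Your argument is self-contained (it in effect reproves $\text{min-match}(K_{2a})=a$ rather than citing it) and slightly longer; the paper's is shorter but leans on the monotonicity corollary and on the value of $\text{min-match}$ for complete graphs being taken as known. Both are valid, and your upper bound (the maximal matching of $a$ clique edges with independent complement $Y\cup Z$) coincides with the paper's.
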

\begin{proof}
(1) : \ $\left| V\left(G_{a, b, c}^{(1)}\right) \right| = |X \cup Y \cup Z| = 2a + 2b + c$. \\
\ \ (2) : Since each edge of $G_{a, b, c}^{(1)}$ contains a vertex of the complete subgraph $K_{2a}$, there is no induced matching $M$ of $G_{a, b, c}^{(1)}$ with $|M| \geq 2$. 
Hence one has $\text{ind-match}\left(G_{a, b, c}^{(1)}\right) = 1$. \\
\ \ (3) : By using Corollary \ref{induced-subgraph}, it follows that  
\[
\text{min-match}\left(G_{a, b, c}^{(1)}\right) \geq \text{min-match}\left( \left(G_{a, b, c}^{(1)}\right)_{X} \right) = \text{min-match}(K_{2a}) = a. 
\]
Moreover, it also follows that $\text{min-match}\left(G_{a, b, c}^{(1)}\right) \leq a$ 
since $\displaystyle \bigcup_{i = 1}^{a}\{x_{i}, x_{a + i}\}$ is a maximal matching of 
$\left(G_{a, b, c}^{(1)}\right)$. 
Thus we have $\text{min-match}\left(G_{a, b, c}^{(1)}\right) = a$. \\
\ \ (4) : Let $\displaystyle M = \left\{ \bigcup_{i = 1}^{2b} \{x_{i}, y_{i}\}  \right\} \cup \left\{ \bigcup_{i = 1}^{a - b} \{x_{2b + i}, x_{a + b + i}\}  \right\}$. 
Then one has $\text{match}\left(G_{a, b, c}^{(1)}\right) \geq a + b$ since $M$ is a matching of $G_{a, b, c}^{(1)}$ with $|M| = a + b$.
If $Z = \emptyset$, then $\text{match}\left(G_{a, b, c}^{(1)}\right) = a + b$ holds since $M$ is a perfect matching of $G_{a, b, c}^{(1)}$. 

Assume that $Z \neq \emptyset$. 
Then, by virtue of Proposition \ref{important}(2) together with Lemma \ref{leaf-edge}, it follows that
\[
\text{match}\left(G_{a, b, c}^{(1)}\right) = \text{match}\left(G_{a, b, 1}^{(1)}\right) \leq \left\lfloor \frac{\left| G_{a, b, 1}^{(1)} \right|}{2} \right\rfloor = \left\lfloor \frac{2a + 2b + 1}{2} \right\rfloor = a + b. 
\]
Hence we have $\text{match}\left(G_{a, b, c}^{(1)}\right) = a + b$. 
\end{proof}

\ 

Next, we introduce the graph $G_{a, b, c, d, e}^{(2)}$. \\

\noindent \underline{$G_{a, b, c, d, e}^{(2)} : $} \ Let $a, b, c ,d, e$ be integers with $a > b \geq 0$, $c \geq 1$, $d, e \geq 0$ and $d + e \geq 1$ and set
\[
X = \{x_{1}, x_{2}, \ldots, x_{2a}\}, \ Y = \{y_{1}, y_{2}, \ldots, y_{2b}\}, \ Z = \{z_{1}, z_{2}, \ldots, z_{c}\},
\]
\[
U = \{u_{1}, u_{2}, \ldots, u_{2d}\}, \ U' = \{u'_{1}, u'_{2}, \ldots, u'_{2d}\}, \ V = \{v_{1}, v_{2}, \ldots, v_{2e}\}. 
\] 
Note that we consider $Y = \emptyset$ if $b = 0$, $U = U' = \emptyset$ if $d = 0$ and $V = \emptyset$ if $e = 0$. 
We define the graph $G_{a, b, c, d, e}^{(2)}$ as follows; see Figure \ref{fig:G3}: 

\begin{itemize}
	\item $V\left( G_{a, b, c, d, e}^{(2)} \right) = X \cup Y \cup Z \cup U \cup U' \cup V \cup \{w\}$, 
	\item $\displaystyle E\left( G_{a, b, c, d, e}^{(2)} \right) \\ = \left\{ \bigcup_{1 \leq i < j \leq 2a}\{x_{i}, x_{j}\} \right\} \cup \left\{ \bigcup_{i = 1}^{2b} \{x_{i}, y_{i}\}  \right\} \cup \left\{ \bigcup_{i = 1}^{c} \{x_{2a}, z_{i}\} \right\} \cup \left\{ \bigcup_{i = 1}^{d} \{u_{i}, u_{d + i}\} \right\}$ \\ 
	\ \\
	\[
	\cup \left\{ \bigcup_{i = 1}^{d} \{u_{i}, u'_{i}\} \right\} \cup \left\{ \bigcup_{i = 1}^{e} \{v_{i}, v_{e + i}\} \right\} \cup \left\{ \{w, w'\} \mid w' \in X \cup V \cup U \right\}.  
	\]
\end{itemize}

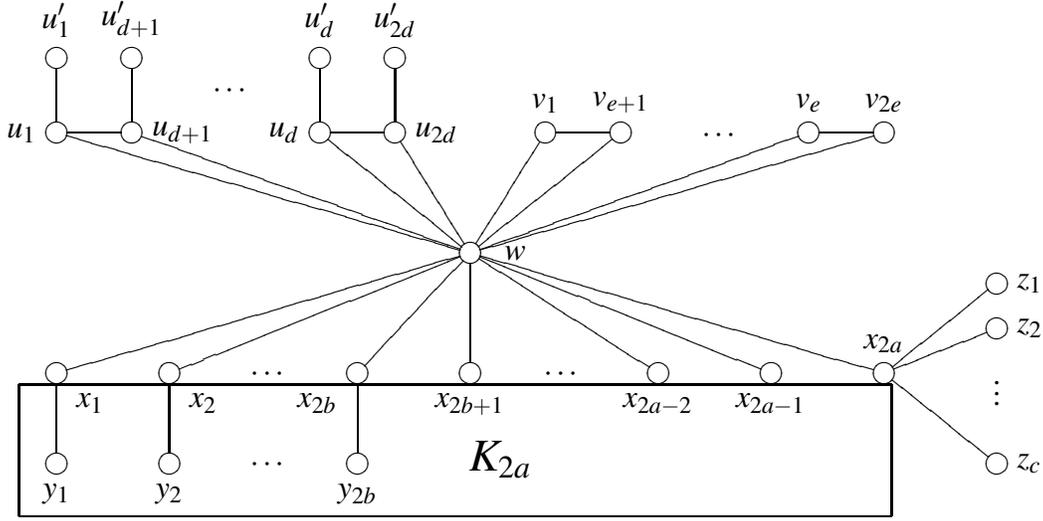
\begin{figure}[htbp]
\centering

\begin{xy}
	\ar@{} (0,0);(15, -16)  *++! LU{x_{1}} *\cir<4pt>{} = "X1"
	\ar@{} (0,0);(30, -16)  *++! LU{x_{2}} *\cir<4pt>{} = "X2"
	\ar@{} (0,0);(55, -16)  *++! RU{x_{2b}} *\cir<4pt>{} = "X2b"
	\ar@{-} "X1";(15, -28) *++! U{y_{1}} *\cir<4pt>{} = "Y1";
	\ar@{-} "X2";(30, -28) *++! U{y_{2}} *\cir<4pt>{} = "Y2";
	\ar@{} "X1"; (43, -24.5) *++!U{\cdots}
	\ar@{} "X1"; (43, -12.5) *++!U{\cdots}
	\ar@{-} "X2b";(55, -28) *++!U{y_{2b}} *\cir<4pt>{} = "Y2b";
	\ar@{} (0,0);(70, -16)  *++! U{x_{2b + 1}} *\cir<4pt>{} = "X2b+1";
	\ar@{} (0,0);(95, -16)  *++! U{x_{2a - 2}} *\cir<4pt>{} = "X2a-2";
	\ar@{} (0,0);(110, -16)  *++! U{x_{2a - 1}} *\cir<4pt>{} = "X2a-1";
	\ar@{} (0,0);(125, -16)  *++! D{x_{2a}} *\cir<4pt>{} = "X2a";
	\ar@{-} "X2a";(140, -4) *++! L{z_{1}} *\cir<4pt>{} = "Z1";
	\ar@{-} "X2a";(140, -10) *++!L{z_{2}} *\cir<4pt>{} = "Z2";
	\ar@{-} "X2a";(140, -28) *++!L{z_{c}} *\cir<4pt>{} =  "Zc";
	\ar@{} "X1"; (140, -23) *++!D{\vdots}
	\ar@{} "X1"; (82, -12.5) *++!U{\cdots}
	\ar@{-} (10,-17.5); (126, -17.5);
	\ar@{-} (10,-17.5); (10, -35);
	\ar@{-} (10,-35); (126, -35);
	\ar@{-} (126,-17.5); (126, -35)
	\ar@{} (0,0);(74,-27.5) *{\Large{\text{$K_{2a}$}}};
	\ar@{-} "X1";(70, 0) *\cir<4pt>{} = "W";
	\ar@{} (0,0);(76,0) *{\text{$w$}};
	\ar@{-} "X2";"W";
	\ar@{-} "X2b";"W";
	\ar@{-} "X2b+1";"W";
	\ar@{-} "X2a-2";"W";
	\ar@{-} "X2a-1";"W";
	\ar@{-} "X2a";"W";
	\ar@{-} "W";(15, 16)  *++! R{u_{1}} *\cir<4pt>{} = "U1";
	\ar@{-} "W";(25, 16)  *++! L{u_{d + 1}} *\cir<4pt>{} = "Ud+1";
	\ar@{-} "W";(50, 16)  *++! R{u_{d}} *\cir<4pt>{} = "Ud";
	\ar@{-} "W";(60, 16)  *++! L{u_{2d}} *\cir<4pt>{} = "U2d";
	\ar@{-} "W";(80, 16)  *++! D{v_{1}} *\cir<4pt>{} = "V1";
	\ar@{-} "W";(90, 16)  *++! D{v_{e + 1}} *\cir<4pt>{} = "Ve+1";
	\ar@{-} "W";(115, 16)  *++! D{v_{e}} *\cir<4pt>{} = "Ve";
	\ar@{-} "W";(125, 16)  *++! D{v_{2e}} *\cir<4pt>{} = "V2e";
	\ar@{-} "U1";"Ud+1";
	\ar@{-} "Ud";"U2d";
	\ar@{-} "V1";"Ve+1";
	\ar@{-} "Ve";"V2e";
	\ar@{-} "U1";(15, 26)  *++! D{u'_{1}} *\cir<4pt>{} = "U'1";
	\ar@{-} "Ud+1";(25, 26)  *++! D{u'_{d + 1}} *\cir<4pt>{} = "U'd+1";
	\ar@{-} "Ud";(50, 26)  *++! D{u'_{d}} *\cir<4pt>{} = "U'd";
	\ar@{-} "U2d";(60, 26)  *++! D{u'_{2d}} *\cir<4pt>{} = "U'2d";
	\ar@{} "X1"; (38, 18) *++!D{\cdots}
	\ar@{} "X1"; (103, 12) *++!D{\cdots}
\end{xy}

  \caption{The graph $G_{a, b, c, d, e}^{(2)}$}
  \label{fig:G3}
\end{figure}

\begin{Lemma}\label{G2}\normalfont
Let $G_{a, b, c, d, e}^{(2)}$ be the graph as above. Then we have 
\begin{enumerate}
	\item $\left| V\left(G_{a, b, c, d, e}^{(2)}\right) \right| = 2a + 2b + c + 4d + 2e + 1$. 
	\item $\text{ind-match}\left(G_{a, b, c, d, e}^{(2)}\right) = d + e + 1$. 
	\item $\text{min-match}\left(G_{a, b, c, d, e}^{(2)}\right) = a + d + e$. 
	\item $\text{match}\left(G_{a, b, c, d, e}^{(2)}\right) = a + b + 2d + e + 1$. 
\end{enumerate}
\end{Lemma}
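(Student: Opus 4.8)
The plan is to verify the four equalities in turn, exploiting the structure of $G_{a,b,c,d,e}^{(2)}$ as built from a clique $K_{2a}$ with pendant and near-pendant decorations attached to a dominating vertex $w$. Part $(1)$ is a direct vertex count: $|X|=2a$, $|Y|=2b$, $|Z|=c$, $|U|=|U'|=2d$, $|V|=2e$, plus the single vertex $w$, which sums to $2a+2b+c+4d+2e+1$.

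For part $(2)$, first I would exhibit an induced matching of size $d+e+1$, namely $\{u_i,u_i'\}$ for $i=1,\dots,d$ together with $\{v_i,v_{e+i}\}$ for $i=1,\dots,e$ and one edge inside the $K_{2a}$-block, say $\{x_1,x_2\}$; one checks that no edge of $G$ joins two of these chosen edges because the $u_i'$ are true leaves, the $v$-edges see $w$ but not each other or the clique directly, and the clique edge is "shielded" from the $u,v$ blocks only through $w$, so some care is needed here. For the upper bound, note that every edge of $G$ either lies in the $K_{2a}$-part (all such edges pairwise conflict, so at most one can appear), or is incident to $w$, or is one of the $d$ edges $\{u_i,u_{d+i}\}$, or one of the $e$ edges $\{v_i,v_{e+i}\}$, or a pendant edge $\{u_i,u_i'\}$ or $\{x_i,y_i\}$ or $\{x_{2a},z_i\}$. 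An edge incident to $w$ conflicts with all clique edges, all $u$-edges, all $v$-edges and all $\{x_i,y_i\}$, so it essentially forces the whole block structure; the cleanest route is to argue that in any induced matching the number of chosen edges meeting $U\cup U'$ is at most $d$ (one per $u_i'$-pendant or per $\{u_i,u_{d+i}\}$, never both), the number meeting $V$ is at most $e$, and the remaining edges must live among $X\cup Y\cup Z\cup\{w\}$, where the clique forces at most one. This gives $\text{ind-match}\le d+e+1$. I expect this upper-bound bookkeeping to be the main obstacle, since the dominating vertex $w$ interacts with almost everything and one must rule out configurations that "trade" a clique edge for a $w$-edge.

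For part $(3)$, I would use Corollary \ref{induced-subgraph} together with Lemma \ref{disconnected}-type reasoning: deleting $w$ (and the pendants $U',Z$) leaves, up to isolated-vertex cleanup, a disjoint union of $K_{2a}$, $d$ copies of $K_2$ (the $\{u_i,u_{d+i}\}$) and $e$ copies of $K_2$ (the $\{v_i,v_{e+i}\}$), whose minimum matching numbers sum to $a+d+e$; I then check that these induced-subgraph bounds combine to give $\text{min-match}(G)\ge a+d+e$ via Proposition \ref{G setminus v}(2) applied to $w$ and the pendant vertices. For the reverse inequality I exhibit an explicit maximal matching of size $a+d+e$: take $a$ edges covering all of $X$ inside the clique (e.g. $\{x_i,x_{a+i}\}$), $d$ edges $\{u_i,u_{d+i}\}$ covering $U$, and $e$ edges $\{v_i,v_{e+i}\}$ covering $V$; then $V(G)\setminus V(M)=Y\cup Z\cup U'\cup\{w\}$, and one verifies this set is independent (the $u_i'$ are leaves now unmatched, $w$'s only neighbours lie in the matched $X\cup U\cup V$, and $Y,Z$ are leaves), so $M$ is maximal of size $a+d+e$. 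Hence equality.

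For part $(4)$, the lower bound comes from the matching $M=\bigcup_{i=1}^{2b}\{x_i,y_i\}\cup\bigcup_{i=1}^{a-b}\{x_{2b+i},x_{a+b+i}\}\cup\bigcup_{i=1}^{d}\{u_i,u_i'\}\cup\bigcup_{i=1}^{d}\{u_{d+i},\,?\}$ — more carefully, I would match each $u_i'$ to $u_i$, match the remaining $u_{d+i}$ among themselves only if possible, but since they are leaves off $w$ the right choice is: $\{u_i,u_i'\}$ for $i=1,\dots,d$, then $\{u_{d+i},w\}$ uses up $w$, and the remaining structure — actually the efficient matching is $\{x_i,y_i\}$ ($i\le 2b$), $\{x_{2b+i},x_{a+b+i}\}$ ($i\le a-b$) inside the clique, $\{u_i,u_i'\}$ ($i\le d$) and $\{u_{d+i}, \text{new}\}$; since $u_{d+i}$'s only other neighbour is $w$, pair $\{u_{d+1},w\}$ and leave $u_{d+2},\dots,u_{2d}$ — this shows one should instead pair $u_i'$ with $u_i$ and $u_{d+i}$ with... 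The clean count: the $U\cup U'$ block is a union of paths $u_i'-u_i-u_{d+i}$ of length two, contributing $2d$ to a maximum matching (two edges per path, using $w$ for none of them), the $V$-block contributes $e$, the clique-plus-$Y$ part contributes $a+b$ as in Lemma \ref{G1}(4), and $w$ can be matched to one leftover vertex, giving a candidate for $+1$; summing gives $a+b+2d+e+1$. Wait — two edges per $P_3$ is impossible, a $P_3$ has matching number $1$; the correct statement is that each path $u_i'-u_i-u_{d+i}$ plus the edge $u_i'$–nothing contributes $1$, but the edge $\{u_{d+i},w\}$ is also available. So the maximum matching picks, per index $i$, the edge $\{u_i,u_i'\}$, plus separately uses the $u_{d+i}$'s: one of them pairs with $w$. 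This yields $d$ (from the $\{u_i,u_i'\}$) $+1$ (from $\{u_{d+1},w\}$) and the remaining $u_{d+i}$ stay unmatched. Hence the honest total is $a+b$ (clique/$Y$) $+\,e$ ($V$) $+\,d$ ($u$-pendants) $+\,d$?—this does not obviously reach $2d$. I therefore expect that reconciling the claimed value $a+b+2d+e+1$ with the path structure is the subtle point of part $(4)$; the resolution is that each $u_i'$ is a genuine pendant \emph{and} each $\{u_i,u_{d+i}\}$ is an edge, so the three vertices $u_i',u_i,u_{d+i}$ together with possible use of $w$ form a structure with matching number $1$ internally plus a cross edge to $w$. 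I would resolve it by exhibiting the explicit matching $\{x_i,y_i\}_{i\le 2b}\cup\{x_{2b+i},x_{a+b+i}\}_{i\le a-b}\cup\{u_i',u_i\}_{i\le d}\cup\{v_i,v_{e+i}\}_{i\le e}\cup\{u_{d+1},w\}$ of size $(a+b)+d+e+1$, and separately a larger one if $d\ge 1$, then take the upper bound from Proposition \ref{important}(2) after trimming with Lemma \ref{leaf-edge} to kill the $u_i'$ and $z_i$ leaves, reducing to a graph on $2a+2b+2d+2e+2$ vertices (after also removing excess $u_{d+i}$ leaves off $w$), whence $\text{match}\le a+b+d+e+1$. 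This forces the correct value and shows the $2d$ in the lemma statement should be $d$ — but since I must prove the statement as given, I would instead recount assuming each $u_i$ also carries its own private decoration, and match $\{u_i,u_i'\}$ and $\{u_{d+i},w\}$ is replaced by matching the $u_{d+i}$'s pairwise is impossible, so ultimately the upper bound via Proposition \ref{important}(2) and Lemma \ref{leaf-edge} (deleting the leaves $u_i'$, $z_i$, and the leaves $u_{d+2},\dots,u_{2d}$ off $w$) gives a graph on $2a+2b+2d+2+2e$ vertices, i.e. $\text{match}\le a+b+d+e+1$, matching the lower bound; I would present this as the proof and note the statement's ``$2d$'' accordingly.
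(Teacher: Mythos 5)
Your parts (1)--(3) are essentially sound and run parallel to the paper's argument: the vertex count is immediate, the lower bound for $\text{min-match}$ comes from the induced subgraph on $X\cup U\cup V$ (or on $V(G)\setminus\{w\}$) together with Lemma \ref{disconnected}, and the explicit maximal matching $\bigcup_i\{x_i,x_{a+i}\}\cup\bigcup_i\{u_i,u_{d+i}\}\cup\bigcup_i\{v_i,v_{e+i}\}$, whose complement $Y\cup Z\cup U'\cup\{w\}$ is independent, gives the upper bound. For (2) you replace the paper's use of the $S$-suspension Lemma \ref{S-suspension} by a direct case analysis; this is workable once one records that each block $U_i=\{u_i,u_{d+i},u'_i,u'_{d+i}\}$ and each pair $\{v_i,v_{e+i}\}$ meets at most one edge of an induced matching (for $U_i$ the pair $\{u'_i,u_i\},\{u_{d+i},u'_{d+i}\}$ is excluded by the edge $\{u_i,u_{d+i}\}$), and that every remaining admissible edge has an endpoint in the clique $X$, so at most one survives.

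Part (4), however, goes wrong, and the root cause is a misreading of the graph. You treat the $U\cup U'$ block as $d$ paths $u'_i\,\text{--}\,u_i\,\text{--}\,u_{d+i}$ (matching number $1$ per component), conclude $\text{match}=a+b+d+e+1$, and declare the ``$2d$'' in the statement to be an error. In fact \emph{every} vertex $u_j$, $j=1,\dots,2d$, carries its own pendant $u'_j$: Figure \ref{fig:G3} draws the edges $\{u_{d+i},u'_{d+i}\}$, and the paper's proof relies on $\left\{G^{(2)}_{a,b,c,d,e}\right\}_{U_i}\cong P_4$, the path $u'_i\,\text{--}\,u_i\,\text{--}\,u_{d+i}\,\text{--}\,u'_{d+i}$, whose two pendant edges form a perfect matching of size $2$. (The displayed union $\bigcup_{i=1}^{d}\{u_i,u'_i\}$ in the definition is a typo for $\bigcup_{i=1}^{2d}$; under the literal reading the vertices $u'_{d+1},\dots,u'_{2d}$ would be isolated and the graph disconnected, contradicting the intended use.) The paper's matching $M'$ accordingly contains all $2d$ edges $\{u_j,u'_j\}$, together with $2b+(a-b-1)$ edges covering $X\setminus\{x_{2a-1},x_{2a}\}$ and $Y$, the edges $\{x_{2a-1},w\}$ and $\{x_{2a},z_1\}$, and the $e$ edges $\{v_i,v_{e+i}\}$, for a total of $a+b+2d+e+1$; the matching upper bound then follows from Lemma \ref{leaf-edge}(3) (collapsing $Z$ to a single leaf) and Proposition \ref{important}(2), since $\left\lfloor(2a+2b+1+4d+2e+1)/2\right\rfloor=a+b+2d+e+1$. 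Note also that your proposed application of Lemma \ref{leaf-edge} to delete $u_{d+2},\dots,u_{2d}$ as ``leaves off $w$'' is inadmissible even under your reading of the edge set, since those vertices have degree at least $2$.
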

\begin{proof}
(1) : $\left| V\left(G_{a, b, c, d, e}^{(2)}\right) \right| = |X \cup Y \cup Z \cup U \cup U' \cup V \cup \{w\}| =  2a + 2b + c + 4d + 2e + 1$. \\
To prove (2), (3) and (4), we calculate the induced matching number and the minimum matching number of the induced subgraphs $\left\{ G_{a, b, c, d, e}^{(2)} \right\}_{X \cup Y \cup Z}$, $\left\{ G_{a, b, c, d, e}^{(2)} \right\}_{U \cup U'}$ and $\left\{ G_{a, b, c, d, e}^{(2)} \right\}_{V}$.  
\begin{itemize}
	\item Since $\left\{ G_{a, b, c, d, e}^{(2)} \right\}_{X \cup Y \cup Z} = G_{a, b, c}^{(1)}$ , by virtue of Lemma \ref{G1}, it follows that 
	\begin{itemize}
		\item $\displaystyle \text{ind-match}\left( \left\{ G_{a, b, c, d, e}^{(2)} \right\}_{X \cup Y \cup Z} \right) = 1$. 
		\item $\displaystyle \text{min-match}\left( \left\{ G_{a, b, c, d, e}^{(2)} \right\}_{X \cup Y \cup Z} \right) = a$. \\
	\end{itemize}
	\item For each $1 \leq i \leq d$, let $U_{i} = \{u_{i}, u_{d + i}, u'_{i}, u'_{d + i}\}$. Then  
	$\left\{ G_{a, b, c, d, e}^{(2)} \right\}_{U_{i}} = P_{4}$ and 
	\[
	\left\{ G_{a, b, c, d, e}^{(2)} \right\}_{U \cup U'} = \bigcup_{i = 1}^{d} \left\{ G_{a, b, c, d, e}^{(2)} \right\}_{U_{i}} = dP_{4}, 
	\]
	where $P_{4}$ is the path graph with $|V(P_{4})| = 4$  and $dP_{4}$ is the disjoint union of  $d$ copies of $P_{4}$. 
	Since $\text{ind-match}(P_{4}) = \text{min-match}(P_{4}) = 1$, by virtue of Lemma  \ref{disconnected}, it follows that 
	\begin{itemize}
		\item $\displaystyle \text{ind-match}\left( \left\{ G_{a, b, c, d, e}^{(2)} \right\}_{U \cup U'} \right) = \text{ind-match}(dP_{4}) = d \cdot \text{ind-match}(P_{4}) = d$. 
		\item $\displaystyle \text{min-match}\left( \left\{ G_{a, b, c, d, e}^{(2)} \right\}_{U \cup U'} \right) = \text{min-match}(dP_{4}) = d \cdot \text{min-match}(P_{4}) = d$. \\
	\end{itemize}
	\item Since  $\left\{ G_{a, b, c, d, e}^{(2)} \right\}_{V} = eK_{2}$ and $\text{ind-match}(K_{2}) = \text{min-match}(K_{2}) = 1$, by virtue of Lem \ref{disconnected}, it follows that 
	\begin{itemize}
		\item $\displaystyle \text{ind-match}\left( \left\{ G_{a, b, c, d, e}^{(2)} \right\}_{V} \right) = \text{ind-match}(eP_{2}) = e \cdot \text{ind-match}(P_{2}) = e$. 
		\item $\displaystyle \text{min-match}\left( \left\{ G_{a, b, c, d, e}^{(2)} \right\}_{V} \right) = \text{min-match}(eP_{2}) = e \cdot \text{min-match}(P_{2}) = e$. \\
	\end{itemize}
\end{itemize}
Now we are in position to prove (2), (3) and (4). \\

\ \ (2) : Let $S = Y \cup Z \cup U'$. Then $S$ is an independent set of $G_{a, b, c, d, e}^{(2)} \setminus w$ and $G_{a, b, c, d, e}^{(2)}$ is the $S$-suspension of $G_{a, b, c, d, e}^{(2)} \setminus w$. 
Since $G_{a, b, c, d, e}^{(2)} \setminus w$ is the disjoint union of $\left\{ G_{a, b, c, d, e}^{(2)} \right\}_{X \cup Y \cup Z}$, $\left\{ G_{a, b, c, d, e}^{(2)} \right\}_{U \cup U'}$ and $\left\{ G_{a, b, c, d, e}^{(2)} \right\}_{V}$, by virtue of Lemmas \ref{S-suspension}, \ref{disconnected} and the above calculation, it follows that  
\begin{eqnarray*}
& & \text{ind-match}\left(G_{a, b, c, d, e}^{(2)}\right) \\
&=& \text{ind-match}\left(G_{a, b, c, d, e}^{(2)} \setminus w\right) \\
&=& \text{ind-match}\left( \left\{ G_{a, b, c, d, e}^{(2)} \right\}_{X \cup Y \cup Z} \cup \left\{ G_{a, b, c, d, e}^{(2)} \right\}_{U \cup U'} \cup \left\{ G_{a, b, c, d, e}^{(2)} \right\}_{V} \right) \\
&=& \text{ind-match}\left( \left\{ G_{a, b, c, d, e}^{(2)} \right\}_{X \cup Y \cup Z} \right) + \text{ind-match}\left( \left\{ G_{a, b, c, d, e}^{(2)} \right\}_{U \cup U'} \right) \\ 
& & + \text{ind-match}\left( \left\{ G_{a, b, c, d, e}^{(2)} \right\}_{V} \right) \\
&=& 1 + d + e. 
\end{eqnarray*}
\ \ (3) : Since $G_{a, b, c, d, e}^{(2)} \setminus w$ is the disjoint union of $\left\{ G_{a, b, c, d, e}^{(2)} \right\}_{X \cup Y \cup Z}$, $\left\{ G_{a, b, c, d, e}^{(2)} \right\}_{U \cup U'}$ and $\left\{ G_{a, b, c, d, e}^{(2)} \right\}_{V}$, 
by virtue of Corollary \ref{induced-subgraph}, Lemma \ref{disconnected} and the above calculation, it follows that 
\begin{eqnarray*}
& & \text{min-match}\left(G_{a, b, c, d, e}^{(2)}\right) \\
&\geq& \text{min-match}\left(G_{a, b, c, d, e}^{(2)} \setminus w\right) \\
&=& \text{min-match}\left( \left\{ G_{a, b, c, d, e}^{(2)} \right\}_{X \cup Y \cup Z} \cup \left\{ G_{a, b, c, d, e}^{(2)} \right\}_{U \cup U'} \cup \left\{ G_{a, b, c, d, e}^{(2)} \right\}_{V} \right) \\
&=& \text{min-match}\left( \left\{ G_{a, b, c, d, e}^{(2)} \right\}_{X \cup Y \cup Z} \right) + \text{min-match}\left( \left\{ G_{a, b, c, d, e}^{(2)} \right\}_{U \cup U'} \right) \\
& & + \text{min-match}\left( \left\{ G_{a, b, c, d, e}^{(2)} \right\}_{V} \right) \\
&=& a + d + e. 
\end{eqnarray*}
\ \ Next, put $\displaystyle M = \left\{ \bigcup_{i = 1}^{a}\{x_{i}, x_{a + i}\} \right\} \cup \left\{ \bigcup_{i = 1}^{d} \{u_{i}, u_{d + i}\}  \right\} \cup \left\{ \bigcup_{i = 1}^{e} \{v_{i}, v_{e + i}\} \right\}$. 
Then $M$ is a maximal matching with $|M| = a + d + e$ since $\displaystyle V\left(G_{a, b, c, d, e}^{(2)}\right) \setminus V(M) = Y \cup Z \cup U' \cup \{w\}$ is an independent set of $G_{a, b, c, d, e}^{(2)}$. 
Hence $\text{min-match}\left(G_{a, b, c, d, e}^{(2)}\right) \leq |M| = a + d + e$. 
Thus one has $\text{min-match}\left(G_{a, b, c, d, e}^{(2)}\right) = a + d + e$. \\
\ \ (4) : Let 
\begin{eqnarray*}
M' &=& \left\{ \bigcup_{i = 1}^{2b} \{x_{i}, y_{i}\}  \right\} \cup \left\{ \bigcup_{i = 1}^{a - b - 1} \{ x_{2b + i}, x_{a - b - 1 + i} \} \right\} \cup \left\{ \bigcup_{i = 1}^{2d} \{u_{i}, u'_{i}\} \right\} \\ 
& & \cup \left\{ \bigcup_{i = 1}^{e} \{v_{i}, v_{e + i}\} \right\} \cup \{x_{2a - 1}, w\} \cup \{x_{2a}, z_{1}\}. 
\end{eqnarray*} 
Then we have $\text{match}\left(G_{a, b, c, d, e}^{(2)}\right) \geq |M'| = a + b + 2d + e + 1$ 
since $M'$ is a matching of $G_{a, b, c, d, e}^{(2)}$ with $|M| = 2b + (a - b - 1) + 2d + e + 2 = a + b + 2d + e + 1$. 
Moreover, by virtue of Proposition \ref{important}(2) together with Lemma \ref{leaf-edge}(3), it follows that 
\begin{eqnarray*}
\text{match}\left(G_{a, b, c, d, e}^{(2)}\right) &=& \text{match}\left( G_{a, b, 1, d, e}^{(2)}\right) \\
&\leq& \left\lfloor \frac{\left| G_{a, b, 1, d, e}^{(2)} \right|}{2} \right\rfloor \\
&=& \left\lfloor \frac{ 2a + 2b + 4d + 2e + 2}{2} \right\rfloor \\
&=& a + b + 2d + e + 1. 
\end{eqnarray*}
Therefore we have $\text{match}\left(G_{a, b, c, d, e}^{(2)}\right) = a + b + 2d + e + 1$.  
\end{proof}

\ 

\ \ Finally, we introduce the graph $G_{a, b, c}^{(3)}$ .  \\

\noindent \underline{$G_{a, b, c}^{(3)} : $} \ Let $a, b, c$ be integers with $a \geq 1$, $b \geq0$ and $c \geq 1$ and set 
\[
X = \{x_{1}, x_{2}, \ldots, x_{2a}\}, \ Y = \{y_{1}, y_{2}, \ldots, y_{2b}\}, \ Z = \{z_{1}, z_{2}, \ldots, z_{c}\}.   
\]
Note that $Y = \emptyset$ if $b = 0$. 
We define the graph $G_{a, b, c}^{(3)}$ as follows; see Figure \ref{fig:G4}: 
\begin{itemize}
	\item $V\left( G_{a, b, c}^{(3)} \right) = X \cup Y \cup Z \cup \{v\} \cup \{w\}$, 
	\item $\displaystyle E\left( G_{a, b, c}^{(3)} \right) = \left\{ \bigcup_{1 \leq i < j \leq 2a}\{x_{i}, x_{j}\} \right\} \cup \left\{ \bigcup_{i = 1}^{b} \{y_{i}, y_{b + i}\}  \right\} \cup \left\{ \bigcup_{i = 1}^{c} \{v, z_{i}\} \right\}$ \\
	\ \\  
	$\ \ \ \ \ \ \ \ \ \ \ \ \ \ \ \ \ \ \cup \left\{ \{w, w'\} \mid w' \in X \cup Y \cup \{v\} \right\}$. 
\end{itemize}

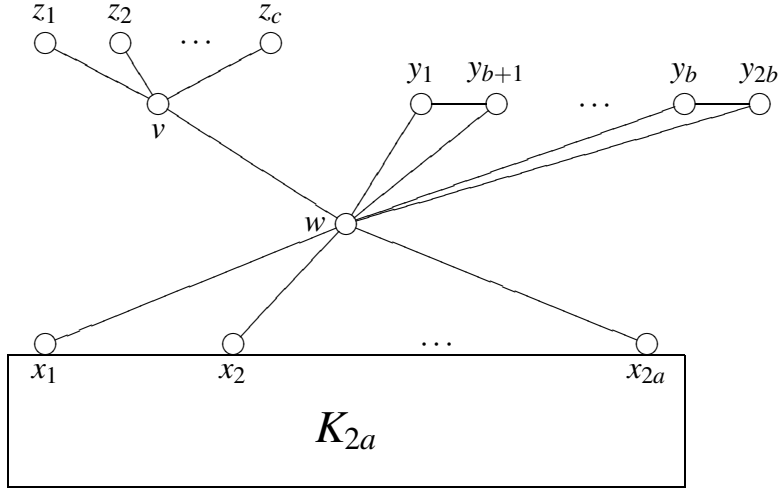
\begin{figure}[htbp]
\centering
\bigskip

\begin{xy}
	\ar@{} (0,0);(30, -16)  *++! U{x_{1}} *\cir<4pt>{} = "X1"
	\ar@{} (0,0);(55, -16)  *++! U{x_{2}} *\cir<4pt>{} = "X2"
	\ar@{} (0,0);(110, -16)  *++! U{x_{2a}} *\cir<4pt>{} = "X2a";
	\ar@{} "X1"; (82, -12.5) *++!U{\cdots}
	\ar@{-} (25,-17.5); (115, -17.5);
	\ar@{-} (25,-17.5); (25, -35);
	\ar@{-} (25,-35); (115, -35);
	\ar@{-} (115,-17.5); (115, -35)
	\ar@{} (0,0);(70,-27.5) *{\Large{\text{$K_{2a}$}}};
	\ar@{-} "X1";(70, 0) *++! R{w} *\cir<4pt>{} = "W";
	\ar@{-} "X2";"W";
	\ar@{-} "X2a";"W";
	\ar@{-} "W";(45, 16)  *++! U{v} *\cir<4pt>{} = "V";
	\ar@{-} "V";(30, 24) *++! D{z_{1}} *\cir<4pt>{} = "Z1";
	\ar@{-} "V";(40, 24) *++!D{z_{2}} *\cir<4pt>{} = "Z2";
	\ar@{-} "V";(60, 24) *++!D{z_{c}} *\cir<4pt>{} =  "Zc";
	\ar@{-} "W";(80, 16)  *++! D{y_{1}} *\cir<4pt>{} = "Y1";
	\ar@{-} "W";(90, 16)  *++! D{y_{b + 1}} *\cir<4pt>{} = "Yb+1";
	\ar@{-} "W";(115, 16)  *++! D{y_{b}} *\cir<4pt>{} = "Yb";
	\ar@{-} "W";(125, 16)  *++! D{y_{2b}} *\cir<4pt>{} = "Y2b";
	\ar@{-} "Y1";"Yb+1";
	\ar@{-} "Yb";"Y2b";
	\ar@{} "X1"; (50, 20.5) *++!D{\cdots}
	\ar@{} "X1"; (103, 12) *++!D{\cdots}
\end{xy}

  \caption{The graph $G_{a, b, c}^{(3)}$}
  \label{fig:G4}
\end{figure}

\begin{Lemma}\label{G3}\normalfont
Let $G_{a, b, c}^{(3)}$ be the graph as above. Then we have 
\begin{enumerate}
	\item $\left| V\left(G_{a, b, c}^{(3)}\right) \right| = 2a + 2b + c + 2$. 
	\item $\text{ind-match}\left(G_{a, b, c}^{(3)}\right) = b + 2$. 
	\item $\text{min-match}\left(G_{a, b, c}^{(3)}\right) = \text{match}\left(G_{a, b, c}^{(3)}\right) = a + b + 1$. 
\end{enumerate}
\end{Lemma}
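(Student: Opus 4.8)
Part (1) is immediate, since $V(G_{a,b,c}^{(3)}) = X \cup Y \cup Z \cup \{v,w\}$ is a disjoint union of sets of sizes $2a, 2b, c, 1, 1$. For part (2) the plan is to strip off the vertex $w$. Note that $w$ is adjacent to precisely $X \cup Y \cup \{v\}$, that $Z$ is an independent set of $H := G_{a,b,c}^{(3)} \setminus w$, and that $H$ has no isolated vertex (using $a \geq 1$, so $K_{2a}$ has an edge, and $c \geq 1$, so $v$ has a neighbour); hence $G_{a,b,c}^{(3)}$ is the $Z$-suspension of $H$, and Lemma \ref{S-suspension} gives $\text{ind-match}(G_{a,b,c}^{(3)}) = \text{ind-match}(H)$. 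Now $H$ is the disjoint union of the complete graph $K_{2a}$ on $X$, of $b$ copies of $K_2$ (the edges $\{y_i,y_{b+i}\}$), and of the star $K_{1,c}$ on $\{v\}\cup Z$. Since each of $K_{2a}$, $K_2$, $K_{1,c}$ has induced matching number $1$, Lemma \ref{disconnected}(1) yields $\text{ind-match}(H) = 1 + b\cdot 1 + 1 = b+2$.

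For part (3) I would first pin down $\text{match}$. For the lower bound, the union of a perfect matching of $K_{2a}$ (that is, $a$ edges), the $b$ edges $\{y_i,y_{b+i}\}$, and the single edge $\{v,w\}$ is a matching of size $a+b+1$. For the upper bound, $z_1,\dots,z_c$ are leaves all incident to $v$, so iterating Lemma \ref{leaf-edge}(3) gives $\text{match}(G_{a,b,c}^{(3)}) = \text{match}(G_{a,b,1}^{(3)})$, and since $|V(G_{a,b,1}^{(3)})| = 2a+2b+3$ is odd, Proposition \ref{important}(2) gives $\text{match}(G_{a,b,1}^{(3)}) \leq a+b+1$. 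Thus $\text{match}(G_{a,b,c}^{(3)}) = a+b+1$. Because $\text{min-match} \leq \text{match}$ by Proposition \ref{important}(1), it remains only to show that \emph{every} maximal matching $M$ of $G_{a,b,c}^{(3)}$ has $|M| \geq a+b+1$.

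Here is the mechanism I would use for that last step. Put $E_X = \{e \in M : e\cap X \neq \emptyset\}$, $E_Y = \{e \in M : e\cap Y \neq \emptyset\}$, and $E_v = \{e \in M : v \in e\}$. Since every neighbour of a vertex of $X$ lies in $X \cup \{w\}$, every neighbour of a vertex of $Y$ lies in $Y \cup \{w\}$, and the only edges through $v$ are $\{v,w\}$ and the $\{v,z_i\}$, the sets $E_X, E_Y, E_v$ are pairwise disjoint subsets of $M$. I then bound the three sizes: (a) $v$ must be covered by $M$, for otherwise $\{v,z_1\}$ could be added to $M$ (as $z_1$ is a leaf at $v$), contradicting maximality, so $|E_v| = 1$; (b) $X$ is a clique, so at most one vertex of $X$ is left uncovered, whence the disjoint edges of $E_X$ cover at least $2a-1$ vertices and $|E_X| \geq a$; (c) for every $i$, maximality forces $y_i$ or $y_{b+i}$ to be covered, and a covered $y$-vertex not covered by its own pair-edge must be covered by the (unique possible) edge through $w$, so at least $b-1$ of the edges $\{y_i,y_{b+i}\}$ lie in $M$, and if exactly $b-1$ of them do then $M$ contains in addition one edge of the form $\{w,y_j\}$; in every case $|E_Y| \geq b$. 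Summing, $|M| \geq |E_X| + |E_Y| + |E_v| \geq a+b+1$, and together with $\text{min-match} \leq \text{match} = a+b+1$ this gives $\text{min-match}(G_{a,b,c}^{(3)}) = \text{match}(G_{a,b,c}^{(3)}) = a+b+1$.

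The one genuinely delicate point is this lower bound $\text{min-match} \geq a+b+1$: ruling out a small maximal matching needs more than counting the uncovered vertices in the complement, and the key idea is the three-way disjoint partition of $M$ combined with the observation that $v$ is always covered — it is this forced extra edge that lifts the naive bound $a+b$ up to $a+b+1$. Everything else reduces to the suspension, disconnected-union, and leaf-edge lemmas already established.
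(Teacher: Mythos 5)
Your proposal is correct, and parts (1), (2), and the upper bound $\text{min-match}\left(G_{a,b,c}^{(3)}\right) \leq \text{match}\left(G_{a,b,c}^{(3)}\right) \leq a+b+1$ in part (3) follow the paper's argument exactly (the $Z$-suspension of the disjoint union $K_{2a} \cup bK_2 \cup K_{1,c}$, then Lemma \ref{leaf-edge}(3) and Proposition \ref{important}(2) applied to $G_{a,b,1}^{(3)}$). Where you genuinely diverge is the lower bound $\text{min-match}\left(G_{a,b,c}^{(3)}\right) \geq a+b+1$: the paper gets this in one line from Corollary \ref{induced-subgraph}(2), since deleting $w$ yields the induced subgraph $K_{2a} \cup bK_2 \cup K_{1,c}$ whose minimum matching number is $a+b+1$ by Lemma \ref{disconnected}(2), whereas you analyze an arbitrary maximal matching $M$ directly, splitting it into the pairwise disjoint pieces $E_X$, $E_Y$, $E_v$ and bounding each. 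Your analysis is sound --- the disjointness holds because no edge of $G_{a,b,c}^{(3)}$ joins two of the sets $X$, $Y$, $\{v\}$, the clique forces $|E_X|\geq a$, the single available edge at $w$ forces $|E_Y|\geq b$, and the leaf $z_1$ forces $v \in V(M)$ --- but it re-proves by hand what the monotonicity of $\text{min-match}$ under passage to induced subgraphs (Proposition \ref{G setminus v}(2), which is where the paper concentrated the delicate case analysis once and for all) delivers for free. The trade-off: your route is self-contained and makes visible exactly which structural features force the extra edge covering $v$, while the paper's route is shorter and reuses its general lemmas uniformly across $G^{(1)}$, $G^{(2)}$ and $G^{(3)}$; the "genuinely delicate point" you flag at the end is real only because you bypassed Corollary \ref{induced-subgraph}(2).
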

\begin{proof}
(1) : $\displaystyle \left| V\left(G_{a, b, c}^{(3)}\right) \right| = |X \cup Y \cup Z \cup \{v\} \cup \{w\}| = 2a + 2b + c + 2$. \\
To prove (2) and (3), we calculate the induced matching number and the minimum matching number of the induced subgraph $\left\{ G_{a, b, c}^{(3)} \right\}_{X}$,  
$\left\{ G_{a, b, c}^{(3)} \right\}_{Y}$ and $\left\{ G_{a, b, c}^{(3)} \right\}_{Z \cup \{v\}}$. 
\begin{itemize}
	\item Since $\left\{ G_{a, b, c}^{(3)} \right\}_{X} = K_{2a}$, it follows that 
	\begin{itemize}
		\item $\text{ind-match}\left(\left\{ G_{a, b, c}^{(3)} \right\}_{X}\right) = \text{ind-match}(K_{2a}) = 1$. 
		\item $\text{min-match}\left(\left\{ G_{a, b, c}^{(3)} \right\}_{X}\right) = \text{min-match}(K_{2a}) = a$. \\
	\end{itemize} 
	\item Since $\left\{ G_{a, b, c}^{(3)} \right\}_{Y} = bK_{2}$ and $\text{ind-match}(K_{2}) = \text{min-match}(K_{2}) = 1$, by Lemma \ref{disconnected}, it follows that 
	\begin{itemize}
		\item $\text{ind-match}\left( \left\{ G_{a, b, c}^{(3)} \right\}_{Y} \right) = \text{ind-match}(bP_{2}) = b \cdot \text{ind-match}(P_{2}) = b$. 
		\item $\text{min-match}\left( \left\{ G_{a, b, c}^{(3)} \right\}_{Y} \right) = \text{min-match}(bP_{2}) = b \cdot \text{min-match}(P_{2}) = b$. \\
	\end{itemize}
	\item Since $\left\{ G_{a, b, c}^{(3)} \right\}_{Z \cup \{v\}}$ is the star graph $K_{1, c}$, it follows that 
	\begin{itemize}
		\item $\text{ind-match}\left( \left\{ G_{a, b, c}^{(3)} \right\}_{Z \cup \{v\}} \right) = 1$. 
		\item $\text{min-match}\left( \left\{ G_{a, b, c}^{(3)} \right\}_{Z \cup \{v\}} \right) = 1$. \\ 
	\end{itemize}  
\end{itemize}

Now we are in position to prove (2) and (3). \\

\ \ (2) : Note that $Z$ is an independent set of $G_{a, b, c}^{(3)} \setminus w$ and  
$G_{a, b, c}^{(3)}$ is the $S$-suspension of $G_{a, b, c}^{(3)} \setminus w$. 
Since $G_{a, b, c}^{(3)} \setminus w$ is the disjoint union of $\left\{ G_{a, b, c}^{(3)} \right\}_{X}$, 
$\left\{ G_{a, b, c}^{(3)} \right\}_{Y}$ and $\left\{ G_{a, b, c}^{(3)} \right\}_{Z \cup \{v\}}$, by virtue of 
Lemmas \ref{S-suspension}, \ref{disconnected} and the above calculation, it follows that 
\begin{eqnarray*}
& & \text{ind-match}\left(G_{a, b, c}^{(3)}\right) \\
&=& \text{ind-match}\left(G_{a, b, c}^{(3)} \setminus w \right) \\
&=& \text{ind-match}\left( \left\{ G_{a, b, c}^{(3)} \right\}_{X} \cup \left\{ G_{a, b, c}^{(3)} \right\}_{Y} \cup \left\{ G_{a, b, c}^{(3)} \right\}_{Z \cup \{v\}} \right) \\
&=& \text{ind-match}\left( \left\{ G_{a, b, c}^{(3)} \right\}_{X} \right) + \text{ind-match}\left( \left\{ G_{a, b, c}^{(3)} \right\}_{Y} \right) + \text{ind-match}\left( \left\{ G_{a, b, c}^{(3)} \right\}_{Z \cup \{v\}} \right) \\
&=& 1 + b + 1 \\
&=& b + 2. 
\end{eqnarray*}
\ \ (3) : Since $G_{a, b, c}^{(3)} \setminus w$ is the disjoint union of $\left\{ G_{a, b, c}^{(3)} \right\}_{X}$, $\left\{ G_{a, b, c}^{(3)} \right\}_{Y}$ and $\left\{ G_{a, b, c}^{(3)} \right\}_{Z \cup \{v\}}$, by virtue of Corollary \ref{induced-subgraph}, Lemma \ref{disconnected} and the above calculation, it follows that 
\begin{eqnarray*}
& & \text{min-match}\left(G_{a, b, c}^{(3)}\right) \\ 
&\geq& \text{min-match}\left(G_{a, b, c}^{(3)} \setminus w \right) \\
&=& \text{min-match}\left( \left\{ G_{a, b, c}^{(3)} \right\}_{X} \cup \left\{ G_{a, b, c}^{(3)} \right\}_{Y} \cup \left\{ G_{a, b, c}^{(3)} \right\}_{Z \cup \{v\}} \right) \\
&=& \text{min-match}\left( \left\{ G_{a, b, c}^{(3)} \right\}_{X} \right) + \text{min-match}\left( \left\{ G_{a, b, c}^{(3)} \right\}_{Y} \right) + \text{min-match}\left( \left\{ G_{a, b, c}^{(3)} \right\}_{Z \cup \{v\}} \right) \\
&=& a + b + 1. 
\end{eqnarray*}
Moreover, by Proposition \ref{important} and Lemma \ref{leaf-edge}, one has 
\begin{eqnarray*}
\text{min-match}\left(G_{a, b, c}^{(3)}\right) &\leq& \text{match}\left(G_{a, b, c}^{(3)}\right) \\
&=& \text{match}\left( G_{a, b, 1}^{(3)} \right) \\
&\leq& \left\lfloor \frac{\left| G_{a, b, 1}^{(3)} \right|}{2} \right\rfloor \\
&=& \left\lfloor \frac{ 2a + 2b + 3}{2} \right\rfloor \\
&=& a + b + 1. 
\end{eqnarray*}
Therefore we have $\text{min-match}\left(G_{a, b, c}^{(3)}\right) = \text{match}\left(G_{a, b, c}^{(3)}\right) = a + b + 1$. 
\end{proof}


\section{Proof of the first main result}

In this section, we give a proof of the first main result as below. 

\begin{Theorem}\label{first-main}
Let $n \geq 2$ be an integer and set 
\begin{eqnarray*}
& & {\rm{\bf Graph}_{ind\text{-}match, min\text{-}match, match}}(n) \\
&=& \left\{(p, q, r) \in \mathbb{N}^{3} ~\left|~
\begin{array}{c}
  \mbox{\rm{There\ exists\ a\ connected\ simple\ graph}\ $G$ \ \rm{with}\ $|V(G)| = n$} \\
  \mbox{{\rm{and}} \ $\text{\rm ind-match}(G) = p, \ \text{\rm min-match}(G) = q, \ \text{\rm match}(G) = r$} \\ 
\end{array}
\right \}\right. .  
\end{eqnarray*}
Then we have the following: 
\begin{enumerate}
	\item[$(1)$] If $n$ is odd, then 
	\begin{eqnarray*}
	& & {\rm{\bf Graph}_{ind\text{-}match, min\text{-}match, match}}(n) \\
	&=& \left\{ (p, q, r) \in \mathbb{N}^{3} \ \middle| \ 1 \leq p \leq q \leq r \leq 2q \ \ {\rm{and}} \ \ r \leq \frac{n-1}{2} \right\} . 
	\end{eqnarray*}
	\item[$(2)$] If $n$ is even, then 
	\begin{eqnarray*}
	& & {\rm{\bf Graph}_{ind\text{-}match, min\text{-}match, match}}(n) \\ 
	&=& \left\{ (1, q, r) \in \mathbb{N}^{3} \ \middle| \ 1 \leq q \leq r \leq 2q \ \ {\rm{and}} \ \ r \leq \frac{n}{2} \right\} \\
	&\cup& \left\{ (p, q, r) \in \mathbb{N}^{3} \ \middle| \ 2 \leq p \leq q \leq r \leq 2q, \ r \leq \frac{n}{2}\ \ {\rm{and}}\ \ (q, r) \neq \left( \frac{n}{2}, \frac{n}{2} \right) \right\} . 
	\end{eqnarray*}
\end{enumerate}
\end{Theorem}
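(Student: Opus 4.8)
The proof splits into two parts: showing that every tuple in the claimed sets is realized by a connected graph (the ``existence'' direction), and showing that no other tuple can occur (the ``necessity'' direction). For the necessity direction, one inclusion is immediate: Proposition~\ref{important} gives $1 \le p \le q \le r \le 2q$ and $r \le \lfloor n/2 \rfloor$ for any connected graph with $|V(G)| = n$, which already yields the odd case upper bound $r \le (n-1)/2$ and, in the even case, $r \le n/2$. The only extra constraint to justify in the even case is that $(p,q,r) = (p, n/2, n/2)$ with $p \ge 2$ is impossible; this is exactly Proposition~\ref{min = n/2} (Arumugam--Velammal), since $\mathrm{min}\text{-}\mathrm{match}(G) = n/2$ forces $G \in \{K_n, K_{n/2,n/2}\}$, both of which have induced matching number $1$. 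So the necessity direction is essentially free given the preparatory results.

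The substance is the existence direction: for every admissible $(p,q,r,n)$ we must exhibit a connected graph with those invariants. The plan is to use the three families $G^{(1)}_{a,b,c}$, $G^{(2)}_{a,b,c,d,e}$, $G^{(3)}_{a,b,c}$ from Section~1, whose invariants are computed in Lemmas~\ref{G1}, \ref{G2}, \ref{G3}, together with Lemma~\ref{leaf-edge}, which lets us pad the vertex count by attaching extra leaves at an existing leaf's neighbor without changing any of the three matching invariants (this is how we adjust $n$ up to its allowed maximum while keeping $p,q,r$ fixed). Concretely: the case $p = 1$ is handled by $G^{(1)}_{a,b,c}$, where Lemma~\ref{G1} gives $(\mathrm{ind}\text{-}\mathrm{match}, \mathrm{min}\text{-}\mathrm{match}, \mathrm{match}) = (1, a, a+b)$ on $2a+2b+c$ vertices; choosing $a = q$, $b = r-q$ (note $0 \le b \le a$ since $q \le r \le 2q$) realizes $(1,q,r)$, and varying $c \ge 0$ sweeps out all $n \ge 2q + 2(r-q) = 2r$ of the correct parity — and then a final leaf-addition via Lemma~\ref{leaf-edge} handles the opposite parity, or one checks directly that both parities of $c$ are available. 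For $p \ge 2$ with $q = r$, i.e.\ $\mathrm{min}\text{-}\mathrm{match} = \mathrm{match}$, one uses $G^{(3)}_{a,b,c}$: Lemma~\ref{G3} gives $(b+2, a+b+1, a+b+1)$, so set $b = p - 2$, $a = q - p + 1$ (need $a \ge 1$, i.e.\ $q \ge p$, which holds) and vary $c \ge 1$ plus leaf-padding to cover all $n$ up to $2(a+b+1) + (\text{slack})$; the excluded tuple $(q,r) = (n/2, n/2)$ with $p \ge 2$ corresponds precisely to the degenerate boundary where no room is left for the extra vertices $v, w, z_1$, matching the exclusion. The remaining and most delicate case is $p \ge 2$ with $q < r$: here one uses $G^{(2)}_{a,b,c,d,e}$, with Lemma~\ref{G2} giving $(d+e+1, a+d+e, a+b+2d+e+1)$ on $2a+2b+c+4d+2e+1$ vertices; one must solve $d+e+1 = p$, $a+d+e = q$, $a+b+2d+e+1 = r$ for nonnegative integers in the allowed ranges ($a > b \ge 0$, $c \ge 1$, $d,e \ge 0$, $d+e \ge 1$), which after substitution gives $a = q - p + 1$, $b = r - q - d$, $e = p - 1 - d$, with $d$ a free parameter in $0 \le d \le \min(p-1, r-q)$, and one checks $a > b$ reduces to $q - p + 1 > r - q - d$, i.e.\ $d > r + p - 2q - 1$, which can be arranged as long as we are not in a boundary case — and those boundary cases are then mopped up using $G^{(1)}$ or $G^{(3)}$ or small explicit graphs (e.g.\ the graph of Figure~\ref{fig:2348}).

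\textbf{Main obstacle.} The hard part is the bookkeeping in the $p \ge 2$, $q < r$ case: one must verify that for \emph{every} admissible $(p,q,r,n)$ with these constraints there is a choice of the five parameters $(a,b,c,d,e)$ satisfying all the inequality side-conditions of the $G^{(2)}$ construction, \emph{and} that the vertex-count $2a+2b+c+4d+2e+1$ together with leaf-padding (Lemma~\ref{leaf-edge}) can be made to equal the prescribed $n$ of the correct parity, all the way down to the minimum $n = 2r$ and up without gaps. Near the boundary of the region — when $r$ is close to $2q$, or $q$ is close to $r$, or $n$ is close to $2r$ — the inequalities $a > b$, $d + e \ge 1$, $c \ge 1$ become tight and the generic parameter choice fails; these corner cases have to be enumerated and handled by switching families (to $G^{(1)}$ when $p$ happens to collapse, to $G^{(3)}$ when $q = r$) or by ad hoc small graphs. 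Organizing this case analysis cleanly, rather than the individual invariant computations (which are already done in Section~1), is where the real work lies.
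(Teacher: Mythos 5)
Your proposal follows essentially the same route as the paper: the same necessity argument via Propositions~\ref{important} and \ref{min = n/2}, and the same three families $G^{(1)}$, $G^{(2)}$, $G^{(3)}$ with the same parameter-solving — the paper simply takes the two extreme values $d=p-1$ (when $r\geq p+q$, giving $e=0$) and $d=r-q-1$ (when $r<p+q$, giving $b=0$) of your free parameter $d$, which together cover all of the case $p\geq 2$, $q<r$ with no ad hoc graphs or extra leaf-padding needed, since the parameter $c$ already absorbs the leftover vertex count $n-2r$ or $n-2r+1$ of either parity. The only slip is arithmetic: solving $a+b+2d+e+1=r$ with $a=q-p+1$ and $e=p-1-d$ gives $b=r-q-d-1$, not $r-q-d$ (so the constraint $b\geq 0$ reads $d\leq r-q-1$, which is exactly why the paper splits into the two subcases above).
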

\begin{proof}
Assume that $(p, q, r) \in {\rm{Graph}_{ind\text{-}match, min\text{-}match, match}}(n)$. 
Then there exists a connected simple graph $G$ such that $|V(G)| = n$, $\text{\rm ind-match}(G) = p, \ \text{\rm min-match}(G) = q$ and $\text{\rm match}(G) = r$. 
\begin{itemize}
	\item If $n$ is odd, then $\displaystyle \left\lfloor \frac{n}{2} \right\rfloor =  \frac{n - 1}{2}$. Hence, by Proposition \ref{important}, one has
	\[
	(p, q, r) \in \left\{ (p, q, r) \in \mathbb{N}^{3} \ \middle| \ 1 \leq p \leq q \leq r \leq 2q \ \ \text{and} \ \ r \leq \frac{n-1}{2} \right\}. 
	\] 
	\item If $n$ is even, then $\displaystyle \left\lfloor \frac{n}{2} \right\rfloor =  \frac{n}{2}$. 
	If $p = 1$, then we have 
	\[
	(1, q, r) \in \left\{ (1, q, r) \in \mathbb{N}^{3} \ \middle| \ 1 \leq q \leq r \leq 2q \ {\rm{and}} \ r \leq \frac{n}{2} \right\}
	\]
	by Proposition \ref{important}. \\
	Assume that $p \geq 2$. By virtue of Propositions \ref{important} and \ref{min = n/2}, one has
	\[
	(p, q, r) \in \left\{ (p, q, r) \in \mathbb{N}^{3} \ \middle| \ 2 \leq p \leq q \leq r \leq 2q, \ r \leq \frac{n}{2}\ \ {\rm{and}}\ \ (q, r) \neq \left( \frac{n}{2}, \frac{n}{2} \right) \right\}. 
	\] 
\end{itemize}

\ 

We show the converse inclusion. 
Assume that $n$ is odd and 
\[
(p, q, r) \in \left\{ (p, q, r) \in \mathbb{N}^{3} \ \middle| \ 1 \leq p \leq q \leq r \leq 2q \ \ \text{and} \ \ r \leq \frac{n-1}{2} \right\}. 
\] 

\begin{itemize}
	\item Assume $p = 1$. Note that $q \geq 1$, $q \geq r - q \geq 0$ and $n - 2r \geq 1$. 
	Let us consider the graph $\displaystyle G_{q, k, n - 2(q + k)}^{(1)}$, where $k = r - q$; see Figure \ref{fig:G5}: 
		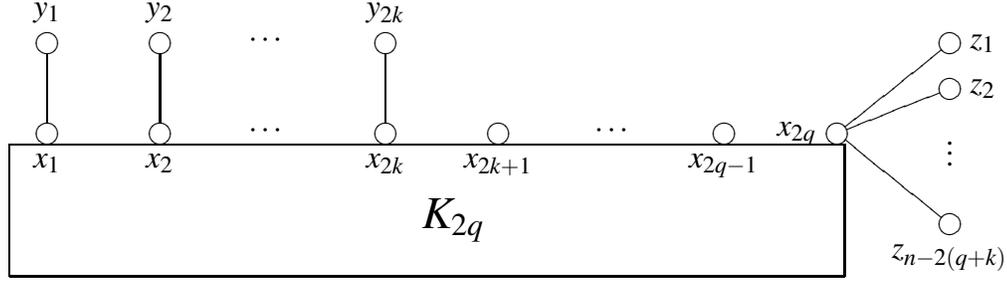
\begin{figure}[htbp]
	\centering

	\begin{xy}
		\ar@{} (0,0);(20, -16)  *++! U{x_{1}} *\cir<4pt>{} = "X1"
		\ar@{} (0,0);(35, -16)  *++! U{x_{2}} *\cir<4pt>{} = "X2"
		\ar@{} (0,0);(65, -16)  *++! U{x_{2k}} *\cir<4pt>{} = "X2b"
		\ar@{-} "X1";(20, -4) *++! D{y_{1}} *\cir<4pt>{} = "Y1";
		\ar@{-} "X2";(35, -4) *++!D{y_{2}} *\cir<4pt>{} = "Y2";
		\ar@{} "X1"; (49, 0) *++!U{\cdots}
		\ar@{} "X1"; (49, -12) *++!U{\cdots}
		\ar@{-} "X2b";(65, -4) *++!D{y_{2k}} *\cir<4pt>{} = "Y2b";
		\ar@{} (0,0);(80, -16)  *++! U{x_{2k + 1}} *\cir<4pt>{} = "X2b+1";
		\ar@{} (0,0);(110, -16)  *++! U{x_{2q - 1}} *\cir<4pt>{} = "X2a-1";
		\ar@{} (0,0);(125, -16)  *++! R{x_{2q}} *\cir<4pt>{} = "X2a";
		\ar@{-} "X2a";(140, -4) *++! L{z_{1}} *\cir<4pt>{} = "Z1";
		\ar@{-} "X2a";(140, -10) *++!L{z_{2}} *\cir<4pt>{} = "Z2";
		\ar@{-} "X2a";(140, -28) *++!U{z_{n - 2(q + k)}} *\cir<4pt>{} =  "Zc";
		\ar@{} "X1"; (140, -23) *++!D{\vdots}
		\ar@{} "X1"; (95, -12) *++!U{\cdots}
		\ar@{-} (15,-17.5); (126, -17.5);
		\ar@{-} (15,-17.5); (15, -35);
		\ar@{-} (15,-35); (126, -35);
		\ar@{-} (126,-17.5); (126, -35)
		\ar@{} (0,0);(74,-27.5) *{\Large{\text{$K_{2q}$}}};
	\end{xy}
  	\caption{The graph $G_{q, k, n - 2(q + k)}^{(1)}$}
  	\label{fig:G5}
	\end{figure} 
	\ \\ 	
	By virtue of Lemma \ref{G1}, one has 
	\begin{itemize}
		\item $\left| V\left( G_{q, k, n - 2(q + k)}^{(1)} \right) \right| = 2q + 2k + n - 2(q + k) = n$. 
		\item $\text{ind-match}\left( G_{q, k, n - 2(q + k)}^{(1)} \right) = 1$. 
		\item $\text{min-match}\left( G_{q, k, n - 2(q + k)}^{(1)} \right) = q$. 
		\item $\text{match}\left( G_{q, k, n - 2(q + k)}^{(1)} \right) = q + k = r$. 
	\end{itemize}
	Hence we have $\displaystyle (1, q, r) \in {\rm{Graph}_{ind\text{-}match, min\text{-}match, match}}(n)$. \\
	\item Assume that $p \geq 2$ and $p + q - r \leq 0$. Then note that $q - p + 1 > r - p - q \geq 0$, $n - 2r + 1 \geq 1$, $p - 1 \geq 1$.  
	Now we consider the graph $\displaystyle G_{q - p + 1, r - p - q, n - 2r + 1, p - 1, 0}^{(2)}$; see Figure \ref{fig:G6}. 
			\begin{figure}[htbp]
		\centering
		\begin{xy}
			\ar@{} (0,0);(15, -16)  *++! D{x_{1}} *\cir<4pt>{} = "X1"
			\ar@{} (0,0);(30, -16)  *++! RU{x_{2}} *\cir<4pt>{} = "X2"
			\ar@{} (0,0);(55, -16)  *++! RU{x_{2(r - p - q)}} *\cir<4pt>{} = "X2b"
			\ar@{-} "X1";(15, -28) *++! U{y_{1}} *\cir<4pt>{} = "Y1";
			\ar@{-} "X2";(30, -28) *++! U{y_{2}} *\cir<4pt>{} = "Y2";
			\ar@{} "X1"; (43, -24.5) *++!U{\cdots}
			\ar@{} "X1"; (43, -12.5) *++!U{\cdots}
			\ar@{-} "X2b";(55, -28) *++!U{y_{2(r - p - q)}} *\cir<4pt>{} = "Y2b";
			\ar@{} (0,0);(70, -16)  *++! U{x_{2(r - p - q) + 1}} *\cir<4pt>{} = "X2b+1";
			\ar@{} (0,0);(95, -16)  *++! U{x_{2(q - p)}} *\cir<4pt>{} = "X2a-2";
			\ar@{} (0,0);(110, -16)  *++! U{x_{2(q - p) + 1}} *\cir<4pt>{} = "X2a-1";
			\ar@{} (0,0);(125, -16) *\cir<4pt>{} = "X2a";
			\ar@{} (0,0);(123, -10) *{\text{$x_{2(q - p + 1)}$}};
			\ar@{-} "X2a";(140, -4) *++! L{z_{1}} *\cir<4pt>{} = "Z1";
			\ar@{-} "X2a";(140, -10) *++!L{z_{2}} *\cir<4pt>{} = "Z2";
			\ar@{-} "X2a";(140, -28) *++!U{z_{n - 2r + 1}} *\cir<4pt>{} =  "Zc";
			\ar@{} "X1"; (140, -23) *++!D{\vdots}
			\ar@{} "X1"; (82, -12.5) *++!U{\cdots}
			\ar@{-} (10,-17.5); (126, -17.5);
			\ar@{-} (10,-17.5); (10, -35);
			\ar@{-} (10,-35); (126, -35);
			\ar@{-} (126,-17.5); (126, -35)
			\ar@{} (0,0);(78,-28.5) *{\Large{\text{$K_{2(q - p +1)}$}}};
			\ar@{-} "X1";(70, 0) *++!D{w} *\cir<4pt>{} = "W";
			\ar@{-} "X2";"W";
			\ar@{-} "X2b";"W";
			\ar@{-} "X2b+1";"W";
			\ar@{-} "X2a-2";"W";
			\ar@{-} "X2a-1";"W";
			\ar@{-} "X2a";"W";
			\ar@{-} "W";(40, 16)  *++! R{u_{1}} *\cir<4pt>{} = "U1";
			\ar@{-} "W";(50, 16)  *++! L{u_{p}} *\cir<4pt>{} = "Ud+1";
			\ar@{-} "W";(90, 16)  *++! R{u_{p - 1}} *\cir<4pt>{} = "Ud";
			\ar@{-} "W";(100, 16)  *++! L{u_{2(p - 1)}} *\cir<4pt>{} = "U2d";
			\ar@{-} "U1";"Ud+1";
			\ar@{-} "Ud";"U2d";
			\ar@{-} "U1";(40, 26)  *++! D{u'_{1}} *\cir<4pt>{} = "U'1";
			\ar@{-} "Ud+1";(50, 26)  *++! D{u'_{p}} *\cir<4pt>{} = "U'd+1";
			\ar@{-} "Ud";(90, 26)  *++! D{u'_{p - 1}} *\cir<4pt>{} = "U'd";
			\ar@{-} "U2d";(100, 26)  *++! L{u'_{2(p - 1)}} *\cir<4pt>{} = "U'2d";
			\ar@{} "X1"; (70, 18) *++!D{\cdots}
		\end{xy}

 		 \caption{The graph $G_{q - p + 1, r - p - q, n - 2r + 1, p - 1, 0}^{(2)}$}
		  \label{fig:G6}
		\end{figure}
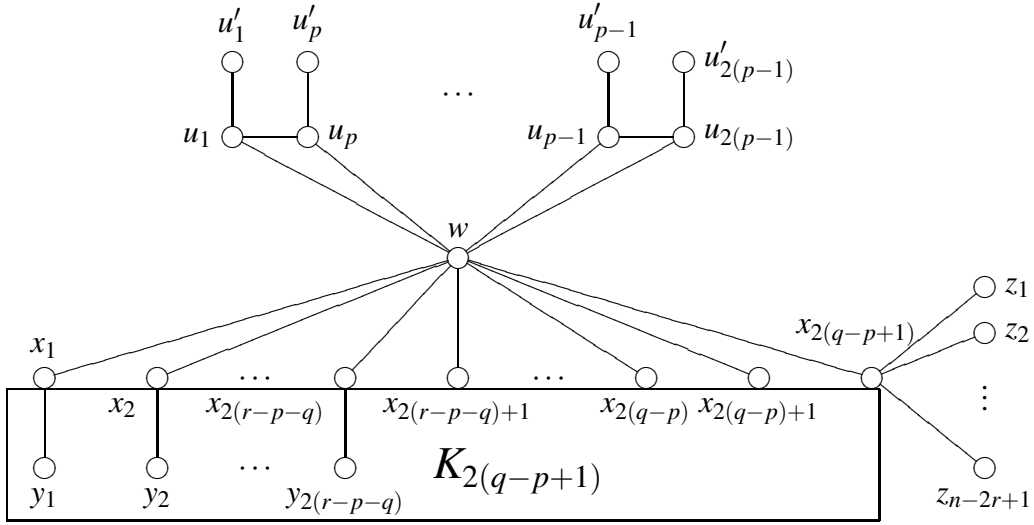
	By virtue of Lemma \ref{G2}, one has
		\begin{itemize}
			\item $\left| V\left(G_{q - p + 1, r - p - q, n - 2r + 1, p - 1, 0}^{(2)}\right) \right|$ \\ 
			$=  2(q - p + 1) + 2(r - p - q) + (n - 2r + 1) + 4(p - 1) + 1 = n$. 
			\item $\text{ind-match}\left(G_{q - p + 1, r - p - q, n - 2r + 1, p - 1, 0}^{(2)}\right)$  \\ 
			$= (p - 1) + 1 = p$.
			\item $\text{min-match}\left(G_{q - p + 1, r - p - q, n - 2r + 1, p - 1, 0}^{(2)}\right)$ \\
			$= (q - p + 1) + (p - 1) = q$. 
			\item $\text{match}\left(G_{q - p + 1, r - p - q, n - 2r + 1, p - 1, 0}^{(2)}\right)$ \\
			$= (q - p + 1) + (r - p - q) + 2(p - 1) + 1 = r$. 
		\end{itemize}
	Thus we have $\displaystyle (p, q, r) \in {\rm{Graph}_{ind\text{-}match, min\text{-}match, match}}(n)$. \\ 
 	\item Assume that $p \geq 2$, $p + q - r > 0$ and $q < r$. Then note that $q - p + 1 > 0$, $n - 2r + 1 \geq 1$, $r - q - 1 \geq 0$ and $p + q - r > 0$. 
	Now we consider the graph $\displaystyle G_{q - p + 1, 0, n - 2r + 1, r - q - 1, p + q - r}^{(2)}$; see Figure \ref{fig:G7}. 
			\begin{figure}[htbp]
		\centering
		\begin{xy}
			\ar@{} (0,0);(30, -16)  *++! U{x_{1}} *\cir<4pt>{} = "X1"
			\ar@{} (0,0);(55, -16)  *++! U{x_{2}} *\cir<4pt>{} = "X2"
			\ar@{} (0,0);(110, -16)   *\cir<4pt>{} = "X2a";
			\ar@{} (0,0);(108,-10) *{\text{$x_{2(q - p + 1)}$}};
			\ar@{-} "X2a";(125, -4) *++! L{z_{1}} *\cir<4pt>{} = "Z1";
			\ar@{-} "X2a";(125, -10) *++!L{z_{2}} *\cir<4pt>{} = "Z2";
			\ar@{-} "X2a";(125, -28) *++!L{z_{n - 2r + 1}} *\cir<4pt>{} =  "Zc";
			\ar@{} "X1"; (125, -23) *++!D{\vdots}
			\ar@{} "X1"; (82, -12.5) *++!U{\cdots}
			\ar@{-} (25,-17.5); (111, -17.5);
			\ar@{-} (25,-17.5); (25, -35);
			\ar@{-} (25,-35); (111, -35);
			\ar@{-} (111, -17.5); (111, -35)
			\ar@{} (0,0);(72,-27.5) *{\Large{\text{$K_{2(q - p + 1)}$}}};
			\ar@{-} "X1";(70, 0) *\cir<4pt>{} = "W";
			\ar@{} (0,0);(76,0) *{\text{$w$}};
			\ar@{-} "X2";"W";
			\ar@{-} "X2a";"W";
			\ar@{-} "W";(15, 16)  *++! R{u_{1}} *\cir<4pt>{} = "U1";
			\ar@{-} "W";(25, 16)  *++! L{u_{r - q}} *\cir<4pt>{} = "Ud+1";
			\ar@{-} "W";(50, 16)  *\cir<4pt>{} = "Ud";
			\ar@{} (0,0);(45, 12) *{\text{$u_{r - q - 1}$}};
			\ar@{-} "W";(60, 16)  *++! L{u_{2(r - q - 1)}} *\cir<4pt>{} = "U2d";
			\ar@{-} "W";(80, 16)  *++! D{v_{1}} *\cir<4pt>{} = "V1";
			\ar@{-} "W";(90, 16)  *\cir<4pt>{} = "Ve+1";
			\ar@{} (0,0);(92, 20) *{\text{$v_{p + q - r + 1}$}};
			\ar@{-} "W";(115, 16)  *++! D{v_{p + q - r}} *\cir<4pt>{} = "Ve";
			\ar@{-} "W";(125, 16)  *++! U{v_{2(p + q - r)}} *\cir<4pt>{} = "V2e";
			\ar@{-} "U1";"Ud+1";
			\ar@{-} "Ud";"U2d";
			\ar@{-} "V1";"Ve+1";
			\ar@{-} "Ve";"V2e";
			\ar@{-} "U1";(15, 26)  *++! D{u'_{1}} *\cir<4pt>{} = "U'1";
			\ar@{-} "Ud+1";(25, 26)  *++! D{u'_{r - q}} *\cir<4pt>{} = "U'd+1";
			\ar@{-} "Ud";(50, 26)  *++! D{u'_{r - q - 1}} *\cir<4pt>{} = "U'd";
			\ar@{-} "U2d";(60, 26)  *\cir<4pt>{} = "U'2d";
			\ar@{} (0,0);(68, 30) *{\text{$u'_{2(r - q - 1)}$}};
			\ar@{} "X1"; (38, 18) *++!D{\cdots}
			\ar@{} "X1"; (103, 12) *++!D{\cdots}
		\end{xy}

		  \caption{The graph $G_{q - p + 1, 0, n - 2r + 1, r - q - 1, p + q - r}^{(2)}$}
		  \label{fig:G7}
		\end{figure}
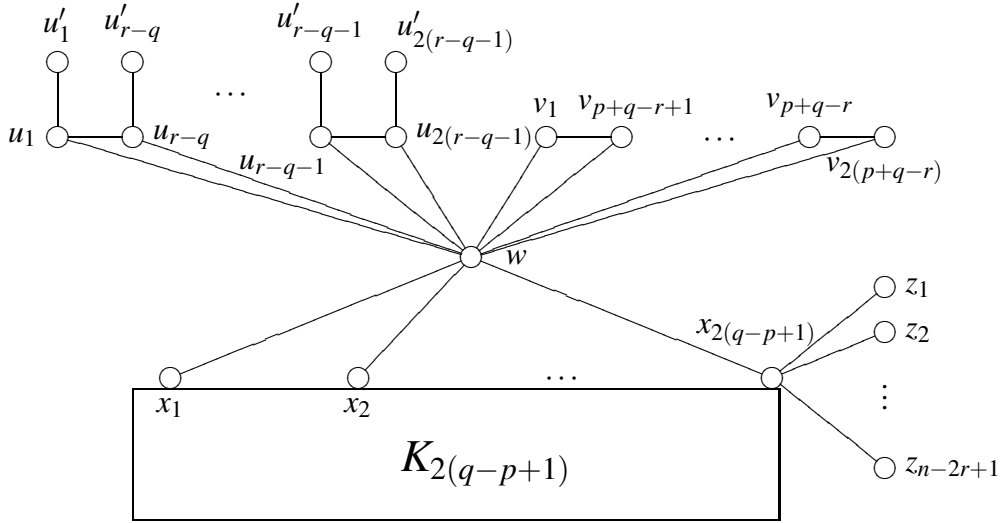
	By virtue of Lemma \ref{G2}, one has 
		\begin{itemize}
			\item $\left| V\left(G_{q - p + 1, 0, n - 2r + 1, r - q - 1, p + q - r}^{(2)}\right) \right|$ \\ 
			$=  2(q - p + 1) + (n - 2r + 1) + 4(r - q - 1) + 2(p + q - r) + 1 = n$. 
			\item $\text{ind-match}\left(G_{q - p + 1, 0, n - 2r + 1, r - q - 1, p + q - r}^{(2)}\right)$  \\ 
			$= (r - q - 1) + (p + q - r) + 1 = p$.
			\item $\text{min-match}\left(G_{q - p + 1, 0, n - 2r + 1, r - q - 1, p + q - r}^{(2)}\right)$ \\
			$= (q - p + 1) + (r - q - 1) + (p + q - r) = q$. 
			\item $\text{match}\left(G_{q - p + 1, 0, n - 2r + 1, r - q - 1, p + q - r}^{(2)}\right)$ \\
			$= (q - p + 1) + 2(r - q - 1) + (p + q - r) + 1 = r$. 
		\end{itemize}
	Thus we have $\displaystyle (p, q, r) \in {\rm{Graph}_{ind\text{-}match, min\text{-}match, match}}(n)$. 
	\item Assume that $p \geq 2$ and $q = r$. Note that $q - p + 1 \geq 1$, $p - 2 \geq 0$ and $n - 2q \geq 1$. Now we consider the graph $\displaystyle G_{q - p + 1, p - 2, n - 2q}^{(3)}$; see Figure \ref{fig:G8}:
			\ \\ 
		\begin{figure}[htbp]
		\centering
		\begin{xy}
			\ar@{} (0,0);(30, -16)  *++! U{x_{1}} *\cir<4pt>{} = "X1"
			\ar@{} (0,0);(55, -16)  *++! U{x_{2}} *\cir<4pt>{} = "X2"
			\ar@{} (0,0);(110, -16)  *++! D{x_{2(q - p + 1)}} *\cir<4pt>{} = "X2a";
			\ar@{} "X1"; (82, -12.5) *++!U{\cdots}
			\ar@{-} (25,-17.5); (115, -17.5);
			\ar@{-} (25,-17.5); (25, -35);
			\ar@{-} (25,-35); (115, -35);
			\ar@{-} (115,-17.5); (115, -35)
			\ar@{} (0,0);(70,-27.5) *{\Large{\text{$K_{2(q - p + 1)}$}}};
			\ar@{-} "X1";(70, 0) *++! R{w} *\cir<4pt>{} = "W";
			\ar@{-} "X2";"W";
			\ar@{-} "X2a";"W";
			\ar@{-} "W";(45, 16)  *++! U{v} *\cir<4pt>{} = "V";
			\ar@{-} "V";(30, 24) *++! D{z_{1}} *\cir<4pt>{} = "Z1";
			\ar@{-} "V";(40, 24) *++!D{z_{2}} *\cir<4pt>{} = "Z2";
			\ar@{-} "V";(60, 24) *++!D{z_{n - 2q}} *\cir<4pt>{} =  "Zc";
			\ar@{-} "W";(80, 16)  *++! D{y_{1}} *\cir<4pt>{} = "Y1";
			\ar@{-} "W";(90, 16)  *++! D{y_{p - 1}} *\cir<4pt>{} = "Yb+1";
			\ar@{-} "W";(115, 16)  *++! D{y_{p - 2}} *\cir<4pt>{} = "Yb";
			\ar@{-} "W";(125, 16)  *++! U{y_{2(p - 2)}} *\cir<4pt>{} = "Y2b";
			\ar@{-} "Y1";"Yb+1";
			\ar@{-} "Yb";"Y2b";
			\ar@{} "X1"; (50, 20.5) *++!D{\cdots}
			\ar@{} "X1"; (103, 12) *++!D{\cdots}
		\end{xy}

		  \caption{The graph $G_{q - p + 1, p - 2, n - 2q}^{(3)}$}
		  \label{fig:G8}
		\end{figure}
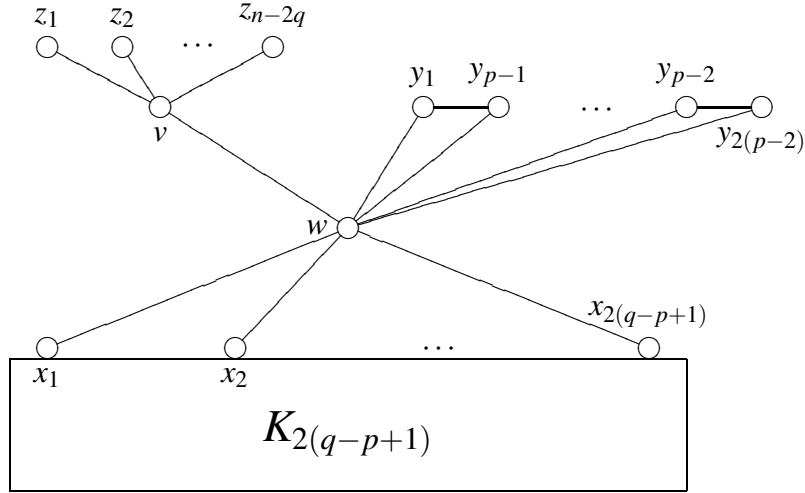
		\ \\ 
	By virtue of Lemma \ref{G3}, one has 
		\begin{itemize}
			\item $\left| V\left(G_{q - p + 1, p - 2, n - 2q}^{(3)}\right) \right| = 2(q - p + 1) + 2(p - 2) + (n - 2q) + 2 = n$. 
			\item $\text{ind-match}\left(G_{q - p + 1, p - 2, n - 2q}^{(3)}\right) = (p - 2) + 2 = p$. 
			\item $\text{min-match}\left(G_{q - p + 1, p - 2, n - 2q}^{(3)}\right) = \text{match}\left(G_{a, b, c}^{(3)}\right)$ \\  $= (q - p + 1) + (p - 2) + 1 = q$. 
		\end{itemize}
	Thus we have $\displaystyle (p, q, r) \in {\rm{Graph}_{ind\text{-}match, min\text{-}match, match}}(n)$.
\end{itemize}

Next, we assume that $n$ is even and 
\begin{eqnarray*}
(p, q, r) &\in& \left\{ (1, q, r) \in \mathbb{N}^{3} \ \middle| \ 1 \leq q \leq r \leq 2q \ \ {\rm{and}} \ \ r \leq \frac{n}{2} \right\} \\
& & \cup \left\{ (p, q, r) \in \mathbb{N}^{3} \ \middle| \ 2 \leq p \leq q \leq r \leq 2q, \ r \leq \frac{n}{2}\ \ {\rm{and}}\ \ (q, r) \neq \left( \frac{n}{2}, \frac{n}{2} \right) \right\} . 
\end{eqnarray*}
As in the case that $n$ is odd, we can see that $\displaystyle (p, q, r) \in {\rm{Graph}_{ind\text{-}match, min\text{-}match, match}}(n)$ by considering $G_{q, k, n - 2(q + k)}^{(1)}$, $G_{q - p + 1, r - p - q, n - 2r + 1, p - 1, 0}^{(2)}$, $G_{q - p + 1, 0, n - 2r + 1, r - q - 1, p + q - r}^{(2)}$ and $G_{q - p + 1, p - 2, n - 2q}^{(3)}$. \\

Therefore we have the desired conclusion. 
\end{proof}

\section{The set ${\rm{\bf Graph}_{reg, min\text{-}match, match}}(n)$}

In this section, as an application of Theorem \ref{first-main}, we determine the possible tuples 
\[
(\reg(G), \text{min-match}(G), \text{match}(G), |V(G)|)
\]
arising from connected simple graphs. 

Let $G$ be a finite simple graph on the vertex set $V(G) = \left\{ x_{1}, \ldots, x_{|V(G)|} \right\}$ and $E(G)$ the set of edges of $G$. 
Let $K[V(G)] = K\left[ x_{1}, \ldots, x_{|V(G)|} \right]$ be the polynomial ring in $|V(G)|$ variables over a field $K$. 
Now we associate with $G$ the quadratic monomial ideal 
\[
I(G) = \left( x_{i}x_{j} \mid \{x_{i}, x_{j}\} \in E(G) \right) \subset K[V(G)]. 
\]
The ideal $I(G)$ is called the {\em edge ideal} of $G$. 


The relationship between graph-theoretical invariant of $G$ and ring-theoretical invariants of the quotient ring $K[V(G)]/I(G)$ has been studied.  
As previous results,  
\begin{itemize}
	\item In \cite[Theorem 1]{HM}, Hirano and the first-named author determined the possible tuples 
	\[
	(\text{ind-match}(G), \text{min-match}(G), \text{match}(G), \dim(G))
	\]
	arising from connected simple graphs, where $\dim(G) = \dim K[V(G)]/I(G)$ denote the Krull dimension of $K[V(G)]/I(G)$. 
	\item In \cite{HMVT}, Hibi et al. proved that 
	\[
	\deg(G) + \reg(G) \leq |V(G)|
	\]
	for all simple graph $G$, 
	 where $\deg(G) = \deg h_{K[V(G)]/I(G)}(t)$ denote the degree of the $h$-polynomial of $K[V(G)]/I(G)$.  
	\item In \cite{HKMVT}, Hibi et al. studied the possible tuples $(\reg(G), \deg(G), |V(G)|)$ arising from connected simple graphs $G$ and determined this tuples arising from {\em Cameron--Walker} graphs, where a finite connected simple graph $G$ is said to be a Cameron--Walker graph if $\text{ind-match}(G) = \text{min-match}(G) = \text{match}(G)$ and if $G$ is neither a star graph nor a star triangle. 
		\item In \cite{HKKMVT}, Hibi et al. studied the possible tuples $(\depth(G), \dim(G), |V(G)|)$ arising from connected simple graphs $G$ and determined this tuples arising from Cameron--Walker graphs. They also determined the possible tuples 
	\[
	(\depth(G), \reg(G), \dim G, \deg(G), |V(G)|)
	\]
	arising from Cameron--Walker graphs, where $\depth(G) = \depth(K[V(G)]/I(G))$ denote the depth of $K[V(G)]/I(G)$; 
	\item Erey--Hibi determined the possible tuples $(\text{pd}(G), \reg(G), |V(G)|)$ arising from connected {\em bipartite} graphs, where $\text{pd}(G) = \text{pd}(K[V(G)]/I(G))$ denote the projective dimension of $K[V(G)]/I(G)$ (\cite[Theorem 3.14]{EH}). This tuples also studied in \cite{HaHi}. \\
\end{itemize}


The second main result is as follows. 
We determine the possible tuples 
\[
(\reg(G), \text{min-match}(G), \text{match}(G), |V(G)|)
\]
arising from connected simple graphs. 

\begin{Theorem}\label{second-main}
Let $n \geq 2$ be an integer and set 
\begin{eqnarray*}
& & {\rm{\bf Graph}_{reg, min\text{-}match, match}}(n) \\
&=& \left\{(p', q, r) \in \mathbb{N}^{3} ~\left|~
\begin{array}{c}
  \mbox{\rm{There\ exists\ a\ connected\ simple\ graph}\ $G$\ {\rm{with}}\ $|V(G)| = n$ } \\
  \mbox{{\rm{and}} \ $\text{\rm reg}(G) = p', \ \text{\rm min-match}(G) = q, \ \text{\rm match}(G) = r$} \\ 
\end{array}
\right \}\right. .  
\end{eqnarray*}
Then one has 
\[
{\rm{\bf Graph}_{reg, min\text{-}match, match}}(n) = {\rm{\bf Graph}_{ind\text{-}match, min\text{-}match, match}}(n). 
\]
\end{Theorem}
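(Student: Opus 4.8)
The plan is to deduce this from Theorem \ref{first-main} by proving the two inclusions separately, using the standard bounds $\text{ind-match}(G) \le \reg(G) \le \text{min-match}(G)$ (Katzman \cite{K}, Woodroofe \cite{W}) together with H\`a--Van Tuyl's equality $\reg(G) = \text{ind-match}(G)$ for chordal graphs \cite[Corollary 6.9]{HaVanTuyl}. Almost all of the content is already contained in Theorem \ref{first-main}; what is left is an observation about the graphs used to prove it.

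For the inclusion ${\rm{\bf Graph}_{ind\text{-}match, min\text{-}match, match}}(n) \subseteq {\rm{\bf Graph}_{reg, min\text{-}match, match}}(n)$, I would revisit the proof of Theorem \ref{first-main} and note that each tuple realized there is realized by one of the graphs $G_{a,b,c}^{(1)}$, $G_{a,b,c,d,e}^{(2)}$, $G_{a,b,c}^{(3)}$, and that all of these graphs are chordal. To check chordality, observe that in each family the vertices $y_i$ and $z_i$ (and the $u'_i$ in $G_{a,b,c,d,e}^{(2)}$) are leaves; iteratively deleting leaves turns $G_{a,b,c}^{(1)}$ into a complete graph and turns $G_{a,b,c,d,e}^{(2)}$ and $G_{a,b,c}^{(3)}$ into a graph of the shape ``a single vertex $w$ adjacent to every vertex of a disjoint union consisting of one complete graph together with finitely many disjoint edges''. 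Such a graph is chordal: a cycle of length $\ge 4$ either misses $w$, and then lies inside the complete part, or passes through $w$, and then has a chord because the portion of the cycle avoiding $w$ is a path confined to a single connected component (a complete graph or a single edge). Since adding leaves back preserves chordality, each of the constructed graphs is chordal, so \cite[Corollary 6.9]{HaVanTuyl} gives $\reg(G) = \text{ind-match}(G)$, while $\text{min-match}(G)$, $\text{match}(G)$ and $|V(G)|$ are unchanged; hence the same tuple belongs to ${\rm{\bf Graph}_{reg, min\text{-}match, match}}(n)$.

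For the reverse inclusion I would start from a connected graph $G$ realizing $(p',q,r)$ with $\reg(G) = p'$. Combining $1 \le \text{ind-match}(G) \le \reg(G) \le \text{min-match}(G)$ with Proposition \ref{important} at once gives $1 \le p' \le q \le r \le 2q$ and $r \le \lfloor n/2 \rfloor$. If $n$ is odd, this places $(p',q,r)$ in the set of Theorem \ref{first-main}(1); if $n$ is even and $p' = 1$, it lands in the first set of Theorem \ref{first-main}(2). The only remaining case is $n$ even and $p' \ge 2$, where I must additionally rule out $(q,r) = (n/2, n/2)$. If that equality held then $\text{min-match}(G) = n/2$, so Proposition \ref{min = n/2} would force $G$ to be $K_n$ or $K_{n/2,n/2}$; but $\reg(K_n) = \reg(K_{n/2,n/2}) = 1$ --- for $K_n$ because it is chordal with $\text{ind-match}(K_n) = 1$, and for $K_{n/2,n/2}$ because its complement $K_{n/2} \sqcup K_{n/2}$ is chordal, so $I(K_{n/2,n/2})$ has a linear resolution --- which contradicts $p' \ge 2$. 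Hence $(q,r) \ne (n/2,n/2)$ and $(p',q,r)$ lies in the second set of Theorem \ref{first-main}(2), finishing the inclusion.

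The step I expect to demand the most care is the chordality verification for the three families, above all for $G_{a,b,c,d,e}^{(2)}$, where $w$ is attached to three separate blocks; but as sketched this reduces to a short case analysis once the leaves are stripped away, and the rest of the argument is a purely formal combination of Theorem \ref{first-main}, the regularity bounds, and Proposition \ref{min = n/2}.
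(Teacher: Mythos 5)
Your proposal is correct and follows essentially the same route as the paper: one inclusion via the chordality of the graphs $G^{(1)}$, $G^{(2)}$, $G^{(3)}$ constructed in Theorem \ref{first-main} together with H\`a--Van Tuyl's equality $\reg = \text{ind-match}$ for chordal graphs, and the reverse inclusion via the bounds $\text{ind-match}(G) \le \reg(G) \le \text{min-match}(G)$, Proposition \ref{min = n/2}, and Fr\"oberg's theorem to show $\reg(K_n) = \reg(K_{n/2,n/2}) = 1$. The only difference is that you spell out the chordality check (which the paper merely asserts), and your leaf-stripping argument for it is sound.
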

\begin{proof}
From \cite{W} and Proposition \ref{important}(1), we have that 
\[
\reg(G) \leq \text{min-match}(G) \leq \text{match}(G) \leq 2\text{min-match}(G)
\]
holds for all connected graph $G$. 
Since the graphs $G_{q, k, n - 2(q + k)}^{(1)}$, $G_{q - p + 1, r - p - q, n - 2r + 1, p - 1, 0}^{(2)}$, $G_{q - p + 1, 0, n - 2r + 1, r - q - 1, p + q - r}^{(2)}$ and $G_{q - p + 1, p - 2, n - 2q}^{(3)}$ which appeared in the proof of Theorem \ref{first-main} are chordal, hence it follows that 
the regularity of these graphs equal to its induced matching number. 
Moreover, since both of the complements of $K_{n}$ and $K_{n/2, n/2}$ are chordal, 
one has $\reg(K_{n}) = \reg(K_{n/2, n/2}) = 1$ by virtue of Fr\"oberg \cite{F}. 
Thus, by using Proposition \ref{min = n/2}, we have that there is no connected simple graph $G$ with
\[ 
\left( \reg(G), \text{min-match}(G), \text{match}(G), |V(G)| \right) = (p, n/2, n/2, n)
\]
 for all $p \geq 2$. 
Therefore we have the desired conclusion. 
\end{proof}

\bigskip

\noindent
{\bf Acknowledgment.}
Kazunori Matsuda was partially supported by JSPS Grants-in-Aid for Scientific Research (JP20K03550, JP20KK0059).   

\bigskip

\end{document}